\newcommand{\inlineitem}[1][]{%
\ifnum\enit@type=\tw@
    {\descriptionlabel{#1}}
  \hspace{\labelsep}%
\else
  \ifnum\enit@type=\z@
       \refstepcounter{\@listctr}\fi
    \quad\@itemlabel\hspace{\labelsep}%
\fi} \makeatother
\newcommand{\ga}{\alpha}
\newcommand{\gb}{\beta}
\newcommand{\gga}{\gamma}
\newcommand{\gp}{\pi}
\newcommand{\Gd}{\Delta}
\newcommand{\Gl}{\Lambda}
\newcommand{\Gs}{\Sigma}
\newcommand{\Gom}{\Omega}
\newcommand{\subs}{\subset}
\newcommand{\sups}{\supset}
\newcommand{\bs}{\backslash}
\newcommand{\mbb}{\mathbb}
\newcommand{\us}{\underset}
\newcommand{\os}{\overset}
\newcommand{\lra}{\longrightarrow}
\newcommand{\llra}{\longleftrightarrow}
\newcommand{\Ra}{\Rightarrow}
\newcommand{\es}{\emptyset}
\newcommand{\equ}[1]{%
\begin{equation*}
#1
\end{equation*}
}
\newcommand{\equa}[1]{%
\begin{equation*}
\begin{aligned}
#1
\end{aligned}
\end{equation*}
}
\newcommand{\matthreefour}[9]{%
  \def\argi{{#1}}%
  \def\argii{{#2}}%
  \def\argiii{{#3}}%
  \def\argiv{{#4}}%
  \def\argv{{#5}}%
  \def\argvi{{#6}}%
  \def\argvii{{#7}}%
  \def\argviii{{#8}}%
  \def\argix{{#9}}%
  \matthreefourRelay
}
\newcommand\matthreefourRelay[3]{%
\begin{pmatrix}
  \argi     & \argii & \argiii & \argiv\\
  \argv     & \argvi & \argvii & \argviii\\
  \argix    & #1     & #2      & #3
\end{pmatrix}
}
\theoremstyle{plain}
\newtheorem{theorem}{Theorem}[section]
\newtheorem{prop}[theorem]{Proposition}
\newtheorem{lem}[theorem]{Lemma}
\newtheorem{cor}[theorem]{Corollary}
\newtheorem{conj}[theorem]{Conjecture}
\newtheorem{claim}[theorem]{Claim}
\newtheorem{prob}[theorem]{Problem}
\def\namedlabel#1#2{\begingroup
   \def\@currentlabel{#2}%
   \label{#1}\endgroup
}
\newtheorem*{thmOmega}{\bf{Theorem} $\bm{\Gom}$}
\theoremstyle{definition}
\newtheorem{defn}[theorem]{Definition}
\theoremstyle{remark}
\newtheorem{remark}[theorem]{Remark}
\newtheorem{example}[theorem]{Example}
\numberwithin{equation}{section}
\begin{document}
\title[On a Conjecture of Kelly on Sylvester Gallai Designs]{On a Conjecture of Kelly on (1,3)-representation of Sylvester Gallai Designs}
\author{C P Anil Kumar}
\address{C P Anil Kumar, School of Mathematics, Harish-Chandra Research Institute, HBNI, Chhatnag Road, Jhunsi, Prayagraj (Allahabad), 211 019,  India. \,\, email: {\tt akcp1728@gmail.com}}
\author{Anoop Singh}

\address{Anoop Singh, School of Mathematics, Harish-Chandra Research Institute, HBNI, Chhatnag Road, Jhunsi, Prayagraj (Allahabad), 211 019,  India. \,\, email: {\tt anoopsingh@hri.res.in}}
\subjclass[2010]{Primary 51A20, Secondary 51A45}
\keywords{Sylvester-Gallai Designs, (1,3)-Representation}
\thanks{This work is done while the first author is a Post Doctoral Fellow and the second author is a Research Scholar at Harish-Chandra Research Institute, Prayagraj(Allahabad).}
\date{\sc \today}
\begin{abstract}
We give an exact criterion of a conjecture of L.~M.~Kelly to hold true which is stated as follows. If there is a finite family $\Gs$ of mutually skew lines in $\mbb{R}^l,l\geq 4$ such that the three dimensional affine span (hull) of every two lines in $\Gs$, contains at least one more line of $\Gs$, then we have that $\Gs$ is entirely contained in a three dimensional space if and only if the arrangement of affine hulls is central.
Finally, this article leads to an analogous question for higher dimensional skew affine spaces, that is, for $(2,5)$ representations of sylvester-gallai designs in $\mbb{R}^6$, which is answered in the last section.
\end{abstract}
\maketitle
\section{\bf{Historical Introduction and the Main Theorem}}
In 1893, J.~J.~Sylvester~\cite{JJS} posed the following problem that has attracted considerable attention.
\begin{prob}
Let $n$ given points have the property that the line joining any two of them passes through a third point of the set. Must the $n$ points lie on a line?
\end{prob}
For a few decades this problem was forgotten and forty years later, in 1933, Erd\H{o}s asked this question~\cite{MR0698647}, which was solved by a fellow countryman T.~Gallai and others~\cite{MR1525919}, (also see~\cite{MR1069788} for a survey).

Let $\mbb{K}$ be a field and $\mbb{P}^n(\mbb{K})$ be the $n$-dimensional projective space. We say a finite set $A \subseteq \mbb{P}^n(\mbb{K})$ of points is a Sylvester-Gallai configuration (SGC), if for any distinct points $x,y\in A$ there exists $z\in A$ such that $x,y,z$ are distinct collinear points. 
An easy generalization of the Sylvester's problem gives the following theorem.
\begin{theorem}[Sylvester-Gallai Theorem]
If $A$ is a finite noncollinear set of points in the $n$-dimensional projective space $\mbb{P}^n(\mbb{R})$ then there exists a line through exactly two points of $A$, that is, every SGC in $\mbb{P}^n(\mbb{R})$ is collinear.
\end{theorem}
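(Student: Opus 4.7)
The plan is to reduce the general statement to the planar case and then apply Kelly's classical extremal argument. For the reduction, I would choose a generic linear projection $\pi \colon \mathbb{R}^{n+1} \to \mathbb{R}^3$ (working in affine charts, equivalently a generic projection of $\mathbb{P}^n(\mathbb{R})$ to a $\mathbb{P}^2(\mathbb{R})$) such that the restriction $\pi|_A$ is injective and the image $\pi(A)$ is still noncollinear. Such a $\pi$ exists because the conditions to avoid (collapsing two points, collapsing the whole image onto a line) cut out proper Zariski-closed subsets of the space of projections. The crucial observation is that if $\{\pi(a),\pi(b)\}$ is an ordinary line of $\pi(A)$, then $\{a,b\}$ is already an ordinary line of $A$: any third collinear point $c \in A$ on the line through $a,b$ would project to a third point of $\pi(A)$ on the line through $\pi(a),\pi(b)$, contradicting ordinariness downstairs. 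Hence it suffices to prove the theorem for a finite noncollinear set $A \subset \mathbb{R}^2$.

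For the planar case, I would argue by contradiction. Assume that every line joining two points of $A$ contains a third point of $A$. Let $\mathcal{P}$ be the set of pairs $(p,\ell)$, where $p \in A$ and $\ell$ is a line spanned by two points of $A$ with $p \notin \ell$. Since $A$ is finite and noncollinear, $\mathcal{P}$ is nonempty and finite, so the perpendicular distance $d(p,\ell)$ attains a positive minimum on $\mathcal{P}$ at some pair $(p_0,\ell_0)$.

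Let $q$ be the foot of the perpendicular from $p_0$ to $\ell_0$. By hypothesis $\ell_0$ contains at least three points of $A$, so by pigeonhole at least two of them, call them $a$ and $b$, lie weakly on the same side of $q$ along $\ell_0$ (allowing $a = q$), with $b$ farther from $q$ than $a$. Consider the new pair $(a,\ell')$ where $\ell' = \overline{p_0 b}$; this pair is again in $\mathcal{P}$, and an elementary similar-triangles computation, comparing the right triangle formed by $p_0, q, b$ with the right triangle formed by dropping the perpendicular from $a$ to $\ell'$, shows
\begin{equation*}
d(a,\ell') \,<\, d(p_0,q) \,=\, d(p_0,\ell_0),
\end{equation*}
contradicting the minimality of $(p_0,\ell_0)$.

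The main obstacle is the inspired choice of minimization and witness pair in the planar step: the reduction to dimension two and the contradiction from the strict inequality are routine once the configuration is drawn, but one must isolate the correct extremal quantity (the point-to-line distance) and keep careful track of the two cases $a = q$ and $a \neq q$, as well as the ordering of $a,b$ relative to $q$, to be sure the new perpendicular does land inside the relevant triangle and strictly shortens the extremal distance.
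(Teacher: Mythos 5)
Your proof is correct, but note that the paper does not prove this statement at all: it is quoted as the classical Sylvester--Gallai theorem, with the proof attributed to Gallai, Erd\H{o}s--Steinberg and the surveys cited in the introduction, so there is no internal argument to compare yours against. What you have written is the standard L.~M.~Kelly extremal proof (minimal point--line distance) preceded by a generic-projection reduction to the plane, and both halves are sound. Two small points you should make explicit if you write this up: first, since the statement lives in $\mbb{P}^n(\mbb{R})$, before running the Euclidean distance argument you must choose a line at infinity in the target $\mbb{P}^2(\mbb{R})$ missing the finite set $\pi(A)$, so that $\pi(A)$ sits in an affine $\mbb{R}^2$ with the same collinearity relations; second, the genericity claim for the projection (center of dimension $n-3$ avoiding the finitely many secant lines of $A$ and not collapsing some noncollinear triple of $A$ onto a line) deserves one sentence of justification, though it is routine. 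Your key observation --- that an ordinary line of $\pi(A)$ pulls back to an ordinary line of $A$ by injectivity of $\pi|_A$ --- is exactly the right direction of the implication, and the extremal step, including the degenerate case $a=q$ and the strict inequality $d(a,\ell') \le d(q,\ell') < d(p_0,\ell_0)$ via the altitude of the right triangle $p_0qb$, is handled correctly.
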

It is a classical fact that the nine inflection points of a non-degenerate cubic curve in $\mbb{P}^2(\mbb{C})$ constitute an SGC. Serre~\cite{JPSERRE} asked whether an SGC in $\mbb{P}^n(\mbb{C})$ must be coplanar, that is, it must lie in a two dimensional complex plane. This has been solved by L.~M.~Kelly~\cite{MR0834051} using an inequality of F.~Hirzebruch~\cite{MR0717609} involving the number of incidences of points and lines in $\mbb{P}^2(\mbb{C})$. Y.~Miyaoka~\cite{MR0460343} and S.~T.~Yau~\cite{MR0451180} have proved that, for algebraic surfaces $S$ of general type which are minimal, the inequality $c_1^2(S)\leq 3c_2(S)$, where $c_1^2(S),c_2(S)$ are the chern numbers of $S$. Using the Miyaoka-Yau inequality, F.~Hirzebruch~\cite{MR0717609} has deduced that, for an arrangement of $k$ lines in the complex projective plane with $t_k=t_{k-1}=0$, the inequality $t_2+t_3\geq k+t_5+2t_6+3t_7+\cdots$, where $t_r$ is the number of points where exactly $r$ lines meet. Using Hirzebruch inequality and the fact that, for a field $\mbb{K}$ of characteristic zero, any non-linear SGC in $\mbb{P}^2(\mbb{K})$ cannot be contained in three concurrent lines intersecting at a point of SGC, the following theorem is deduced. 	
\begin{theorem}[Kelly]
\label{theorem:LMKELLY}
Every SGC in $\mbb{P}^n(\mbb{C})$ is coplanar.
\end{theorem}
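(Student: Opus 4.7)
First, I would reduce the statement to the case $n=3$. If $A \subseteq \mathbb{P}^n(\mathbb{C})$ is an SGC that is not coplanar, pick four affinely independent points $p_1,p_2,p_3,p_4 \in A$ and let $V$ be the $3$-flat they span; because the line through any two points of $A$ is contained in every flat containing both of them, the set $A \cap V$ is itself an SGC in $V \cong \mathbb{P}^3(\mathbb{C})$, and it is not coplanar. So it suffices to show that every SGC in $\mathbb{P}^3(\mathbb{C})$ is coplanar.

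Next, suppose for contradiction that $A \subset \mathbb{P}^3(\mathbb{C})$ is a non-coplanar SGC. The natural device is to project from a point of $A$: choose $p \in A$ and a plane $\Pi \subset \mathbb{P}^3(\mathbb{C})$ not through $p$, and let $\pi : \mathbb{P}^3(\mathbb{C}) \setminus \{p\} \to \Pi$ be projection from $p$. Put $A' := \pi(A \setminus \{p\}) \subset \Pi \cong \mathbb{P}^2(\mathbb{C})$. For any two distinct $q_1', q_2' \in A'$, their preimages $q_1, q_2 \in A$ are not collinear with $p$ (else $q_1' = q_2'$), and the third point $q_3 \in A$ on the line $\langle q_1, q_2 \rangle$ furnished by the SGC property for $A$ projects to a point $q_3' \in A'$ distinct from $q_1', q_2'$ and lying on $\langle q_1', q_2' \rangle$; hence $A'$ is a SGC in $\mathbb{P}^2(\mathbb{C})$. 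Moreover $A'$ is not collinear, for otherwise $A$ would be contained in a single plane through $p$, contradicting non-coplanarity.

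Finally, I would apply Hirzebruch's inequality $t_2 + t_3 \geq k + \sum_{r \geq 5}(r-4)\, t_r$ to the line arrangement in $\Pi$ determined by the pairs of points of $A'$. Since the SGC condition forces every line of this arrangement to carry at least three points of $A'$, the multiplicity counts $\{t_r\}$ at points of $A'$ (versus at accidental intersections of the arrangement off $A'$) are sharply constrained. The aim is to combine these constraints with the projection-induced extra incidence structure on $A'$ to force $A'$ into one of two configurations that has already been ruled out: either $A'$ is collinear (just excluded) or $A'$ is contained in three concurrent lines meeting at a point of $A'$, which is forbidden by the auxiliary fact quoted in the introduction for non-linear plane SGCs over a field of characteristic zero.

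The hard step, as I see it, is the last one: making precise how the projection from a point of the $3$-dimensional configuration $A$ imposes extra combinatorial structure on the plane SGC $A'$ beyond what is required by the bare SGC condition, and packaging this together with Hirzebruch's inequality to squeeze $A'$ into the forbidden three-concurrent-lines configuration. The first two steps are clean projective-geometric reductions; the real content, and the reason both Hirzebruch's inequality and the three-concurrent-lines lemma are jointly needed, resides entirely in this final step.
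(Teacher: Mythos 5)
This theorem is not proved in the paper: it is quoted from Kelly's 1986 article, and the introduction only names the two ingredients of Kelly's argument (Hirzebruch's inequality for complex line arrangements, and the fact that a non-linear plane SGC over a characteristic-zero field cannot lie on three concurrent lines through one of its points). Your two preliminary reductions are correct and cleanly executed: restriction to a $3$-flat spanned by four independent points does give a non-coplanar SGC in $\mathbb{P}^3(\mathbb{C})$, and projection from a point $p\in A$ does give a non-collinear SGC $A'$ in $\mathbb{P}^2(\mathbb{C})$ (your checks that the third point $q_3$ is neither $p$ nor a preimage of $q_1'$ or $q_2'$ are the right ones). So far your route coincides with the one the paper attributes to Kelly.

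The genuine gap is that the entire content of the theorem sits in the step you explicitly defer, and nothing you have set up so far produces a contradiction. A non-collinear SGC in $\mathbb{P}^2(\mathbb{C})$ exists --- the nine inflection points of a smooth cubic, mentioned in the paper's own introduction --- so arriving at a non-collinear plane SGC $A'$ is not, by itself, absurd. Moreover Hirzebruch's inequality alone does not exclude such configurations: for the Hesse configuration the dual arrangement of $k=9$ lines has $t_2=0$, $t_3=12$, $t_r=0$ for $r\geq 4$, and $t_2+t_3=12\geq 9=k$ holds comfortably. The contradiction in Kelly's proof comes only from quantifying what the spatial origin of $A'$ forces on it --- essentially, counting how many points of $A$ collapse along each line through the center of projection $p$, optimizing over the choice of $p$, and feeding the resulting incidence surplus into Hirzebruch's inequality until $A'$ is squeezed into the forbidden three-concurrent-lines shape. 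You have named this as ``the hard step'' but supplied no argument for it, not even the counting framework; as written the proposal is a correct statement of strategy rather than a proof, and it should not be accepted as one.
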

Also generalizing an SGC, more liberal representations of them have been studied where  points are represented by affine or projective subspaces of dimension $m$ and lines are represented by affine or projective subspaces of dimension $n$ in a higher dimensional affine or projective space respectively. Such a representation is called an $(m,n)$-representation.

In the hope to obtain an elementary solution to Question~\cite{JPSERRE} of Serre, L.~M.~Kelly in 1986 has mentioned the following conjecture in~\cite{MR0834051} regarding a $(1,3)$-represent-ation in four dimensional euclidean space.
 
\begin{conj}
\label{conj:LMKELLYONE}
If there is a finite family $\Gs$ of mutually skew lines in $\mbb{R}^4$ such that the three dimensional affine span (hull) of every two lines in $\Gs$ , contains at least one more line of $\Gs$, then $\Gs$ is entirely contained in a three dimensional space.
\end{conj}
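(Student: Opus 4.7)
The plan is to proceed by contradiction: assume the affine span of $\bigcup \Gs$ is all of $\mbb{R}^4$, and translate the $(1,3)$-SGC hypothesis into a dual incidence problem amenable to a Sylvester-Gallai-type extremal argument. First, embed $\mbb{R}^4 \hookrightarrow \mbb{R}\mbb{P}^4$ by adjoining the hyperplane at infinity, so each skew line $\ell_i \in \Gs$ becomes a projective line $L_i$ and each $3$-dimensional affine hull $H_{ij}$ closes up to a projective hyperplane $\ol{H}_{ij} \subset \mbb{R}\mbb{P}^4$. Under the standing assumption the hyperplanes $\ol{H}_{ij}$ are not all equal, and one can fix three members of $\Gs$ whose affine span is all of $\mbb{R}^4$.

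Next, dualize into $(\mbb{R}\mbb{P}^4)^*$: each $L_i$ corresponds to the projective $2$-plane $\Pi_i$ consisting of hyperplanes containing $L_i$, and each $\ol{H}_{ij}$ corresponds to a point $h_{ij}$. Pairwise skewness of the $L_i$ forces the hyperplane $\ol{H}_{ij}$ to be uniquely determined, so $\Pi_i \cap \Pi_j = \{h_{ij}\}$ for every $i \neq j$. The SGC hypothesis now reads: for every pair $(i,j)$ there exists a third index $k$ with $h_{ij} \in \Pi_k$, while the desired conclusion is equivalent to $\bigcap_i \Pi_i \neq \emptyset$. I would then attempt to imitate the classical Kelly-Moser argument for this family of $2$-planes: choose an affine chart of $(\mbb{R}\mbb{P}^4)^*$ with a Euclidean metric, and among all valid triples $(i,j,k)$ pick one minimising a suitable numerical invariant (a natural candidate is the distance from a generic reference point to one of the $2$-planes $\Pi_i, \Pi_j, \Pi_k$, or some metric quantity attached to the triangle of intersection points $h_{ij}, h_{ik}, h_{jk}$), drop an appropriate perpendicular, and invoke the SGC hypothesis on a new pair to produce a strictly smaller valid triple, contradicting minimality. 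Once $\bigcap_i \Pi_i$ is nonempty, a point in this intersection dualizes back to a projective hyperplane containing every $L_i$; intersecting with the affine chart $\mbb{R}^4$ yields a $3$-dimensional affine subspace containing all of $\Gs$, contradicting the standing assumption.

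The principal obstacle is the extremal step. The classical perpendicular-foot trick for points and lines in $\mbb{R}^2$ rests on the fact that the foot is the \emph{unique} closest point on the line and that three collinear incidences admit a transparent \emph{smallest} configuration. For a family of pairwise meeting $2$-planes in $\mbb{R}\mbb{P}^4$ the geometry is considerably richer, and none of the candidate metric invariants on $\{\Pi_i\}$ obviously decreases under the analogous construction. I expect that closing this gap will require either an auxiliary hypothesis on the arrangement $\{\ol{H}_{ij}\}$ — for instance the centrality condition foreshadowed in the abstract, which would be exactly the additional input forcing the conclusion — or a combinatorial inequality playing the role, for $2$-planes in $\mbb{R}\mbb{P}^4$, of the Hirzebruch inequality invoked in the proof of Theorem~\ref{theorem:LMKELLY}. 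This suggests the conjecture as stated is delicate and that the correct general theorem should carry an extra geometric hypothesis.
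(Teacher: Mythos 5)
The statement you are attempting to prove is false, so the gap you identify at the end of your proposal is not a technical obstacle but the fundamental one. This is Kelly's 1986 conjecture, and it was disproved by Boros, F\"{u}redi and Kelly in 1989 via a $(1,3)$-representation of the Fano plane $\mbb{P}^2(\mbb{F}_2)$ in $\mbb{R}^4$; the paper reproduces a whole family of such counterexamples in Example~\ref{example:FanoPlane}. Concretely: take seven generic hyperplanes $H_1,\dots,H_7$ in $\mbb{R}^4$ indexed by the lines of the Fano plane, and let $L_i$ be the intersection of the three hyperplanes corresponding to the Fano lines through the $i$-th point. Genericity makes the $L_i$ mutually skew and pairwise non-coplanar, the affine hull of $L_i$ and $L_j$ is the unique $H_k$ they share, which contains a third line by the incidence structure of $\mbb{P}^2(\mbb{F}_2)$, and the seven lines cannot lie in a common three dimensional space (that space would have to equal all seven distinct $H_k$). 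In your dual picture this is exactly a family of seven $2$-planes $\Pi_i$ in $(\mbb{P}^4(\mbb{R}))^{*}$, pairwise meeting in single points, satisfying your dual SGC condition, yet with $\bigcap_i \Pi_i = \emptyset$. Hence no Kelly--Moser-style extremal argument, and no choice of metric invariant, can complete the step you flag as the principal obstacle: the statement you would need it to prove has an explicit counterexample.

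Your closing instinct is, however, precisely the content of the paper: the missing hypothesis is centrality of the arrangement of affine hulls, and Theorem~\ref{theorem:Main} asserts that $\Gs$ lies in a three dimensional space if and only if all the three dimensional affine hulls have a common point. The paper's route under that extra hypothesis is quite different from your dualization: it first shows (Theorem~\ref{theorem:CoplanarLine}) that centrality forces the existence of a single line $L$ through the common point that is coplanar with every $L_i$, then projects $\mbb{R}^4$ along $L$ so that each $L_i$ becomes a point of $\mbb{P}^2(\mbb{R})$ and each affine hull becomes a line, and finally invokes the classical real Sylvester--Gallai theorem on the resulting point configuration. If you wish to salvage your write-up, the productive move is to promote your ``auxiliary hypothesis'' remark to the centrality condition and replace the extremal step by this reduction to the ordinary Sylvester--Gallai theorem.
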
 

Three years later in 1989, the authors E.~Boros, Z.~F\"{u}redi and L.~M.~Kelly~\cite{MR0996767} have disproved  Conjecture~\ref{conj:LMKELLYONE} by actually giving a $(1,3)$-representation of the seven point projective plane $\mbb{P}^2(\mbb{F}_2)$ in $\mbb{R}^4$ where points of $\mbb{P}^2(\mbb{F}_2)$ are represented by mutually skew lines in $\mbb{R}^4$ and lines of $\mbb{P}^2(\mbb{F}_2)$ are represented by hyperplanes in $\mbb{R}^4$. We will mention in Example~\ref{example:FanoPlane} of this article, a plenty of such $(1,3)$-representations.

In a further effort to obtain an elementary proof of Serre's Question~\cite{JPSERRE}, the authors in~\cite{MR0996767} have mentioned the following conjecture.

\begin{conj}
\label{conj:LMKELLYTWO}
If a finite family of pairwise skew lines in $\mbb{R}^4$ is such that the three dimensional affine span (hull) containing any two of the lines contains at least two more, then the family is in a single three dimensional space.
\end{conj}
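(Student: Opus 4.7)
The plan is to argue by contradiction, reducing to the paper's main theorem (the centrality criterion for Conjecture~\ref{conj:LMKELLYONE}). Suppose $\Gs = \{L_1, \ldots, L_n\}$ is a pairwise skew family in $\mbb{R}^4$ satisfying the stronger hypothesis that the three-dimensional affine hull of every pair of lines contains at least two more lines of $\Gs$. Because two skew affine lines in $\mbb{R}^4$ always have affine span of dimension exactly three (the three vectors $p_j-p_i, v_i, v_j$ are linearly independent precisely because the lines are disjoint and non-parallel), each pair $\{L_i, L_j\}$ determines a unique 3-flat $H_{ij}$; let $\mcl{H}$ denote this collection. The hypothesis then asserts that every $H \in \mcl{H}$ contains at least four lines, so $(\Gs, \mcl{H})$ is a linear space of minimum block size four. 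Since our hypothesis implies the hypothesis of Conjecture~\ref{conj:LMKELLYONE}, the paper's main theorem applies: $\Gs$ lies in a single 3-flat if and only if $\mcl{H}$ is central. Hence it suffices to show that the $\geq 4$-lines-per-flat hypothesis forces $\mcl{H}$ to be central.

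I would first extract combinatorial constraints. For a fixed $L_i$, each of the other $n-1$ lines lies in a unique 3-flat through $L_i$, and each such flat contributes at least three other lines, so the number $r_i$ of 3-flats through $L_i$ satisfies $r_i \leq (n-1)/3$. Combined with the de~Bruijn--Erd\H{o}s inequality $|\mcl{H}| \geq n$ (with equality forcing $(\Gs, \mcl{H})$ to be a projective plane of order $\geq 3$, since near-pencils are ruled out by the $\geq 4$ block size), one deduces $n \geq 13$, with the borderline case corresponding to $\mbb{P}^2(\mbb{F}_3)$. Thus any counterexample must realize a projective plane of order $\geq 3$, or a more elaborate linear-space structure, as pairwise skew lines in $\mbb{R}^4$ with a non-central arrangement of 3-flats. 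Next I would pass to the hyperplane at infinity $H_\infty \cong \mbb{P}^3$: pairwise skewness makes the direction points $p_1, \ldots, p_n \in \mbb{P}^3$ distinct, and each $H \in \mcl{H}$ determines a projective 2-plane $\pi_H \subset \mbb{P}^3$ containing the direction points of all lines in $H$. Using the $\geq 4$ hypothesis together with a suitable 2-plane slice, I would invoke Theorem~\ref{theorem:LMKELLY} (Kelly's SGC theorem) to force the $p_i$ to be coplanar in $\mbb{P}^3$; translating coplanarity of directions back to $\mbb{R}^4$, all lines share a common 3-flat-direction, and the incidence structure of $\mcl{H}$ then pins them to a single affine 3-flat, yielding centrality (and actually the stronger conclusion) and contradicting the assumption of non-centrality.

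The principal obstacle is precisely this translation through infinity. The condition ``some 2-plane through $p_i, p_j$ contains at least four direction points'' is strictly weaker than a Sylvester--Gallai condition in $\mbb{P}^3$, since through two given points of $\mbb{P}^3$ passes a whole pencil of 2-planes; the hypothesis does not directly select a canonical line in that pencil. To close this gap one must exploit the geometric rigidity of the specific 2-plane $\pi_H$ coming from the 3-flat $H \subset \mbb{R}^4$, rather than an arbitrary 2-plane in the pencil, and one must handle the borderline $n=13$ case directly---either by a combinatorial case analysis of the $\mbb{P}^2(\mbb{F}_3)$ incidences or by ruling out the realizability of any projective plane of order $\geq 3$ as pairwise skew lines in $\mbb{R}^4$ with the required incidence pattern. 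Extending the Boros--F\"uredi--Kelly Fano construction and showing it does not generalize beyond order two is likely the most delicate step.
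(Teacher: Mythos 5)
The statement you are trying to prove is not a theorem of the paper but a conjecture of Boros, F\"uredi and Kelly, and --- as the paper itself records in the paragraph immediately following it --- it is \emph{false}: J.~Bokowski and J.~Richter-Gebert~\cite{MR1142273} constructed in 1992 a $(1,3)$-representation of the thirteen-point projective plane $\mbb{P}^2(\mbb{F}_3)$ by pairwise skew lines in $\mbb{R}^4$. In that configuration every line of $\mbb{P}^2(\mbb{F}_3)$ carries four points, so the three-dimensional affine hull of any two of the thirteen skew lines contains at least two more of them, yet the thirteen lines do not lie in a single three-dimensional space. No proof of Conjecture~\ref{conj:LMKELLYTWO} can therefore exist, and the paper offers none; the only statement of this flavour that the paper actually proves is Theorem~\ref{theorem:Main}, which adds the centrality of the arrangement of affine hulls as an extra --- and, by the counterexample, non-automatic --- hypothesis.

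Your sketch in fact locates its own fatal step. The reduction via Theorem~\ref{theorem:Main} is legitimate as far as it goes: it would suffice to show that the ``at least two more'' hypothesis forces the arrangement of $3$-flats to be central. Your counting correctly shows that a putative counterexample is a linear space with all blocks of size at least four, hence $n\geq 13$ with borderline case $\mbb{P}^2(\mbb{F}_3)$. But the step you defer to the end --- ``ruling out the realizability of any projective plane of order $\geq 3$ as pairwise skew lines in $\mbb{R}^4$ with the required incidence pattern'' --- cannot be carried out, because exactly such a realization exists for order $3$. Equivalently, the Sylvester--Gallai argument at the hyperplane at infinity cannot be completed: as you yourself observe, the hypothesis only supplies, for each pair of direction points in $\mbb{P}^3$, \emph{some} $2$-plane of the pencil through them containing further direction points, which is strictly weaker than an SGC in $\mbb{P}^3$; the Bokowski--Richter-Gebert example shows that this weakness is essential rather than a technical inconvenience to be engineered around.
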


However again in 1992,  J.~Bokowski and J.~R.~Gebert~\cite{MR1142273} have constructed a $(1,3)$-representation of the thirteen point projective plane $\mbb{P}^2(\mbb{F}_3)$ in $\mbb{R}^4$ negating Conjecture~\ref{conj:LMKELLYTWO}. Then it is believed for a while that the answer to Serre's Question~\cite{JPSERRE} perhaps does require deeper algebraic properties of complex numbers. More than a decade later in 2006, N.~Elkies, L.~M.~Pretorius, K.~J.~Swanpoel~\cite{MR2202107} have given an elementary proof of Theorem~\ref{theorem:LMKELLY}. They in fact have proved much more and thereby have concluded the quest for an elementary proof.

\subsection{\bf{The statement of the Main Theorem}}
Conjecture~\ref{conj:LMKELLYONE} in~\cite{MR0834051} has been forgotten after it has been corrected by L.~M.~Kelly in~\cite{MR0996767}. This article now settles in a better manner enhancing the importance of this conjecture which has been made by L.~M.~Kelly in~\cite{MR0834051}. The main theorem concerning Conjecture~\ref{conj:LMKELLYONE} is stated as follows.

\begin{thmOmega}
\namedlabel{theorem:Main}{$\Gom$}
Let $l\geq 4$ be an integer.
Suppose there is a finite family $\Gs$ of mutually skew lines in $\mbb{R}^l$ such that the three dimensional affine span (hull) of every two lines in $\Gs$, contains at least one more line of $\Gs$. Then we have that $\Gs$ is entirely contained in a three dimensional space if and only if the arrangement of the affine hulls is central, that is, the intersection of all the three dimensional affine hulls in $\mbb{R}^l$ of every two lines in $\Gs$ is non-empty.

Stating in other words as follows: If there is a finite family $\Gs$ of mutually skew lines in $\mbb{R}^l$ such that the three dimensional affine span (hull) of every two lines in $\Gs$ is a linear subspace (containing origin) and contains at least one more line of $\Gs$ then we have that $\Gs$ is entirely contained in a three dimensional linear subspace in $\mbb{R}^l$.
\end{thmOmega}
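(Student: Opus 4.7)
The forward direction is immediate: two skew lines in $\mbb{R}^l$ already have a 3-dimensional affine hull, so if $\Sigma$ is contained in a single 3-dimensional affine subspace $\Pi$, then $V_{ij} := \mathrm{aff.hull}(\ell_i, \ell_j) = \Pi$ for every pair, and the intersection of all hulls is $\Pi$ itself, which is non-empty. For the converse I use the equivalent linear reformulation stated in the theorem. Translating a common point of the arrangement to the origin, I may assume every $V_{ij}$ is a 3-dimensional linear subspace. Set $L_i := \mathrm{span}(\ell_i)$, which is 2-dimensional provided the chosen common point lies on no $\ell_i$ (an edge case I postpone). Then $V_{ij} = L_i + L_j$ being 3-dimensional forces $\dim(L_i \cap L_j) = 1$ for every pair.

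Passing to $\mbb{P}^{l-1}(\mbb{R})$, the $L_i$ become a finite family of projective lines $[L_i]$ which pairwise intersect at the points $P_{ij} := [L_i \cap L_j]$, and the third-line hypothesis translates to the statement that the projective plane $[V_{ij}] = [L_i + L_j]$ spanned by $[L_i]$ and $[L_j]$ contains a third projective line $[L_k]$ of the family. The auxiliary lemma I plan to invoke here is the elementary dichotomy: a finite family of pairwise intersecting projective lines in $\mbb{P}^N(\mbb{R})$ is either concurrent at a single point, or contained in a single projective plane. (Sketch: if three of the lines have three distinct pairwise intersection points $P, Q, R$, they span a plane $\Pi$ containing all three; any further line in the family then meets two of those three at two distinct points---a coincident intersection would force the new line to pass through one of $P, Q, R$, but its intersection with the remaining line still yields a second distinct point of $\Pi$---hence it must lie in $\Pi$.) In the coplanar branch the conclusion is immediate: all $L_i$ sit in a single 3-dimensional linear subspace of $\mbb{R}^l$, and hence so does $\Sigma$.

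The substantive case is the concurrent branch, where all $[L_i]$ meet at a projective point $P = [\ell]$ with $\ell \subset \mbb{R}^l$ a 1-dimensional subspace contained in every $L_i$. Here my plan is to quotient by $\ell$: let $\pi : \mbb{R}^l \to \mbb{R}^l/\ell \cong \mbb{R}^{l-1}$. Each $\pi(L_i)$ is 1-dimensional, yielding a point $\bar P_i \in \mbb{P}^{l-2}(\mbb{R})$, and these $\bar P_i$ are pairwise distinct, since $\bar P_i = \bar P_j$ would force $L_i = L_j$ and hence $\ell_i, \ell_j$ to be coplanar, contradicting skewness. Since $\pi(V_{ij}) = \pi(L_i) + \pi(L_j)$ is 2-dimensional and contains $\pi(L_k)$, the projective line through $\bar P_i$ and $\bar P_j$ in $\mbb{P}^{l-2}(\mbb{R})$ passes through the (distinct) third point $\bar P_k$. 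Thus $\{\bar P_i\}$ is a genuine Sylvester--Gallai configuration in a real projective space, and the classical real Sylvester--Gallai theorem forces the $\bar P_i$ to be collinear; lifting, all $\pi(L_i)$ lie in a 2-dimensional subspace $W \subset \mbb{R}^{l-1}$, so every $L_i$ lies in the 3-dimensional linear subspace $\pi^{-1}(W) \subset \mbb{R}^l$, and the theorem follows.

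The hard part I expect is this final quotient-and-reduce step: noticing that collapsing the common line $\ell$ transforms the arrangement of pairwise-intersecting projective lines into a bona fide point-line Sylvester--Gallai configuration, thereby rendering the classical theorem directly applicable. A secondary but genuinely fiddly point is the postponed edge case in which the only available common point of the arrangement is forced to lie on some line of $\Sigma$; this will be handled either by choosing the common point within a positive-dimensional intersection (when that is possible) or, when forced, by running a parallel quotient argument using the offending line through the origin as the kernel.
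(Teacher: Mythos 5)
Your proposal is correct in substance and reaches the conclusion by the same final move as the paper (span each line with a common line through the origin, project, and invoke the classical real Sylvester--Gallai theorem), but it arrives at that common line by a genuinely different and more elementary route. The paper's technical core is Theorem~\ref{theorem:CoplanarLine} (and its higher-dimensional version), proved by first establishing Lemma~\ref{lemma:AffineHulls}, setting up an elaborate bookkeeping of which lines lie in which hulls, treating the Fano configuration separately via Corollary~\ref{cor:Fano}, and then showing by a normal-vector (respectively $V_i=H_i^{\perp}$) computation that all the orthogonal complements of the hulls lie in a common $(l-1)$-dimensional subspace, whose orthogonal complement is the desired line $L$. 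You replace all of this with the observation that the spans $L_i=\mathrm{span}(\ell_i)$ are $2$-dimensional subspaces with $V_{ij}=L_i+L_j$, hence pairwise $1$-dimensional intersections, and then invoke the classical dichotomy for pairwise intersecting projective lines: concurrent or coplanar. The coplanar branch is the conclusion itself, and the concurrent branch hands you the common line $\ell$ for free; your quotient by $\ell$ is the same construction as the paper's perpendicular projection. Your route avoids Lemma~\ref{lemma:AffineHulls}, the combinatorial setup, and the separate Fano-plane proposition entirely, which is a real simplification; what the paper's computation buys instead is an explicit identification of $L$ as $(V_1+V_{b_1}+V_{b_2})^{\perp}$ without any case split. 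The one loose end in your write-up is the postponed edge case where the common point lies on some line $\ell_{i_0}$ (by skewness at most one such line exists): your suggestion of ``quotienting by the offending line'' is not quite the right fix, since quotienting by $\ell_{i_0}$ does not turn the other spans into points. The correct patch is easier: apply the dichotomy to the $n-1$ two-dimensional spans $L_i$, $i\neq i_0$; in the concurrent branch note that $\ell\neq\ell_{i_0}$ and $\ell_{i_0}\not\subset L_i$ (else two lines of $\Gs$ would be coplanar), so $[\pi(\ell_{i_0})]$ is simply one more distinct point of the Sylvester--Gallai configuration and the argument proceeds unchanged. With that repair the proof is complete.
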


More about recent incidence and arrangement problems in particular of Sylvester-Gallai-Type can be found in P.~Brass, W.~O.~J.~Moser and P.~Janos~\cite{MR2163782} along with a huge list of references at the end section $7.2$ on pages 307-310.

\section{\bf{On small $(1,3)$-representations in $\mbb{R}^4$}}
We start this section with the following lemma which is an easy observation.

\begin{lem}
For $1\leq n\leq 6$, if any $n$ mutually skew lines in $\mbb{R}^4$ satisfies that the three dimensional affine span (hull) of every two lines contains a third, then all the $n$ lines are contained in a three dimensional affine space in $\mbb{R}^4$.
\end{lem}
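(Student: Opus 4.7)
The plan is to translate the geometric hypothesis into a combinatorial statement about a linear space structure on $\Gs$ and then finish by a short case analysis in $n$. The cases $n\le 3$ are immediate: for $n\le 2$ any collection of pairwise skew lines already lies in a common $3$-flat (trivially for $n=1$, and because two skew lines span a $3$-flat for $n=2$); for $n=3$ the hypothesis forces the remaining line into the $3$-hull of the other two, so all three lie in this common $3$-flat.

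Assume henceforth $n\ge 4$. For each unordered pair $\ell_i,\ell_j\in\Gs$ let $H_{ij}$ denote the $3$-dimensional affine hull of $\ell_i\cup\ell_j$; by hypothesis $H_{ij}$ contains at least one further line of $\Gs$. Let $\mcl H$ be the family of $3$-flats in $\mbb R^4$ that contain $\ge 3$ members of $\Gs$. The only geometric input required is the following: \emph{two distinct elements $H,H'\in\mcl H$ contain at most one common line of $\Gs$}. Indeed, two skew lines of $\Gs$ lying in $H\cap H'$ already span a $3$-flat, and being contained in both of the $3$-flats $H$ and $H'$ this forces $H=H'$. Consequently every pair $\{\ell_i,\ell_j\}$ lies in a unique block $H_{ij}\in\mcl H$, and writing $k_1,\ldots,k_t\ge 3$ for the block sizes, a double count of pairs yields
\equ{\sum_{s=1}^{t}\binom{k_s}{2}\;=\;\binom{n}{2}.}

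The argument is finished by enumerating the admissible multisets $(k_1,\ldots,k_t)$ with $k_s\ge 3$ and eliminating every candidate except the single-block solution $(n)$ for $n\in\{4,5,6\}$. For $n=4$ the only competitor is $(3,3)$, which is killed because two size-$3$ blocks sharing at most one line cover at least $3+3-1=5>4$ distinct lines. For $n=5$, writing $3a+6b+10c=10$ with $a,b,c\ge 0$ forces $c=1$ and $a=b=0$, so only $(5)$ occurs. For $n=6$ the remaining candidates are $(4,4,3)$, $(4,3,3,3)$ and $(3,3,3,3,3)$, which I would rule out in turn: the first needs $\ge 4+4-1=7$ distinct lines; for the second, the two lines lying outside the size-$4$ block would have to appear together in each of the three size-$3$ blocks, contradicting uniqueness of the block through a pair; for the third, any fixed line $\ell$ would lie in $r_\ell$ size-$3$ blocks contributing $2r_\ell=n-1=5$ other lines, which fails parity.

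The main obstacle is really this last combinatorial step: one has to be systematic in listing every partition of $\binom{n}{2}$ into summands of the form $\binom{k_s}{2}$ with $k_s\ge 3$ and then supply the correct point-counting, block-intersection, or incidence-parity argument to discard each non-trivial candidate. Once this is done, the combinatorial conclusion that the only admissible multiset is $(n)$ is precisely the geometric statement that all $n$ lines of $\Gs$ lie in a common $3$-flat of $\mbb R^4$.
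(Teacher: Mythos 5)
Your proposal is correct. Note that the paper itself offers no argument here --- its proof of this lemma is literally the single line ``The proof is immediate'' --- so there is nothing to compare against except to say that you have supplied the detail the authors omitted. Your route is the natural one: since the lines are mutually skew, two lines lying in two $3$-flats force those flats to coincide, so the distinct affine hulls form a pairwise balanced design on $\Gs$ with every block of size $\ge 3$ and every pair in a unique block; the identity $\sum_s\binom{k_s}{2}=\binom{n}{2}$ then leaves only finitely many block-size multisets, and your eliminations are all sound: $(3,3)$ for $n=4$ and $(4,4,3)$ for $n=6$ fail the inclusion--exclusion count $k_1+k_2-1>n$; $(4,3,3,3)$ fails because each size-$3$ block meets the size-$4$ block in at most one line and so must contain both outside lines, violating uniqueness of the block through that pair; $(3,3,3,3,3)$ fails the parity count $2r_\ell=n-1=5$; and $n=5$ is ruled out by the congruence $3a+6b\equiv 0\pmod 3$ versus $10\equiv 1\pmod 3$. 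The only residual quibble is stylistic: for a lemma the authors consider immediate, one could argue more quickly by fixing one hull $H_{12}$ (which contains $\ge 3$ lines) and pulling the remaining at most three lines into it one at a time, but your design-theoretic count is cleaner, uniform in $n$, and makes transparent exactly where the bound $n\le 6$ is used (the first surviving nontrivial design is the Fano configuration at $n=7$, consistent with Example~\ref{example:FanoPlane}).
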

\begin{proof}
The proof is immediate.	
\end{proof}
For $n=7$, the only $(1,3)$-representation scenario in $\mbb{R}^4$ which requires further examination is the $(1,3)$-representation of the seven-point projective plane $\mbb{P}^2(\mbb{F}_2)$. We discuss this example below.
\begin{example}
\label{example:FanoPlane}

Let $L_i,1\leq i\leq 7$ be mutually skew lines in $\mbb{R}^4$ which is a $(1,3)$-representation of the Fano plane. Let $H_1\sups L_1\cup L_2\cup L_3,\ H_2 \sups L_1\cup L_4\cup L_5,\ H_3\sups L_1\cup L_6\cup L_7,\ H_4\sups L_2\cup L_4 \cup L_6,\ H_5\sups L_2\cup L_5\cup L_7,\ H_6 \sups L_3\cup L_4\cup L_7,\ H_7\sups L_3\cup L_5\cup L_6$ where $H_i,1\leq i\leq 7$ are hyperplanes in $\mbb{R}^4$. Suppose the hyperplanes $H_i,1\leq i\leq 7$ are all distinct and form a generic hyperplane arrangement in $\mbb{R}^4$. Then we can recover the lines from the hyperplanes as follows. We have $L_1=H_1\cap H_2\cap H_3, L_2=H_1\cap H_4\cap H_5, L_3= H_1\cap H_6\cap H_7, L_4= H_2\cap H_4 \cap H_6, L_5= H_2\cap H_5\cap H_7, L_6=H_3\cap H_4\cap H_7, L_7= H_3\cap H_5\cap H_6$. We begin with a generic hyperplane arrangement $H_i,1\leq i\leq 7$ and define $L_i$ using the hyperplanes. It is clear that the lines $L_i, 1\leq i\leq 7$ do not intersect since the intersection of any five distinct hyperplanes is empty. The coplanarity conditions on any pair of lines $L_i,L_j,i\neq j$ is a polynomial condition on the coefficients defining the hyperplanes. So, if we assume the coefficients defining the hyperplanes are more generic enough we obtain a plenty of $(1,3)$-representations of the Fano plane. This proves that main Theorem~\ref{theorem:Main} does not hold true, if the hyperplane arrangement is not central.  
\end{example}
Now we prove the following proposition which is the first non-trivial case of main Theorem~\ref{theorem:Main}.
\begin{prop}
A $(1,3)$-representation of the Fano plane where the affine hulls of any two mutually skew lines are all linear subspaces containing origin in $\mbb{R}^4$ does not exist.
\end{prop}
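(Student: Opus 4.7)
The plan is to assume for contradiction that such a representation exists. Label the seven mutually skew lines $L_1, \ldots, L_7$ by the points of the Fano plane $\mbb{P}^2(\mbb{F}_2)$ and the seven $3$-dimensional linear hyperplanes $H_1, \ldots, H_7$ by its lines, under the standard Fano incidence. In a representation of the Fano plane the $H_j$ must be distinct (otherwise two distinct Fano lines would yield the same $3$-dimensional affine span, and iteratively the structure collapses). Fix nonzero normals $n_j \in (\mbb{R}^4)^*$ with $H_j = \Ker n_j$. For each Fano point $P_i$, set $d_i = \dim(H_a \cap H_b \cap H_c)$ where $\ell_a, \ell_b, \ell_c$ are the three Fano lines through $P_i$; distinctness of the $H_j$ forces $d_i \in \{1, 2\}$.

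The crucial initial observation is that at most one $d_i$ equals $1$: when $d_i = 1$ the line $L_i$ coincides with the $1$-dimensional linear subspace $H_a \cap H_b \cap H_c$ and therefore passes through the origin, so two such lines would meet at the origin and cannot be skew. We then split into two cases.

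Case A: all $d_i = 2$. Every Fano triple of normals is then linearly dependent, so the seven classes $[n_j] \in \mbb{P}^3(\mbb{R})$ form a Sylvester--Gallai configuration (every pair lies on a collinear triple coming from the Fano point where their two Fano lines meet). By the Sylvester--Gallai theorem in $\mbb{P}^n(\mbb{R})$ quoted in the introduction the seven points are collinear, hence $n_1, \ldots, n_7$ span a subspace $V \subseteq (\mbb{R}^4)^*$ of dimension at most $2$. A dimension count then shows $H_a \cap H_b \cap H_c = V^\perp$ for every Fano triple, so every $L_i$ lies in the common $2$-dimensional linear subspace $V^\perp$, contradicting mutual skewness.

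Case B: exactly one $d_i = 1$, and by the transitive symmetry of the Fano plane we may take $d_1 = 1$. Then $n_1, n_2, n_3$ are linearly independent, and we choose coordinates so that $n_j = e_j^*$ for $j = 1, 2, 3$ and consequently $L_1 = \mbb{R} e_4$. The Fano non-incidences $P_1 \notin \ell_j$ for $j = 4, \ldots, 7$ give $L_1 \not\subset H_j$, equivalently the fourth coordinate of $n_j$ is nonzero for $j \geq 4$. The dependencies $\mrm{rank}(n_2, n_4, n_6) = 2$ from $P_4$ and $\mrm{rank}(n_3, n_4, n_7) = 2$ from $P_6$, combined with distinctness of the hyperplanes, yield
\[
n_6 = \alpha' n_2 + \rho n_4, \qquad n_7 = \alpha'' n_3 + \tau n_4,
\]
with all four coefficients $\alpha', \alpha'', \rho, \tau$ nonzero. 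The remaining dependency $\mrm{rank}(n_1, n_6, n_7) = 2$ from $P_3$ is the punchline: the $3 \times 3$ minor of the matrix with rows $n_1, n_6, n_7$ on columns $1, 2, 4$ evaluates to $\alpha' \tau (n_4)_4$, which is forced to vanish but has all nonzero factors, giving the contradiction. The main difficulty is Case B, where one must pick out the three Fano points $P_3, P_4, P_6$ whose dependencies conspire and identify the single vanishing minor; Case A is a direct application of Sylvester--Gallai.
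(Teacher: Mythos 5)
Your proof is correct, but it is organized quite differently from the paper's, so a comparison is worthwhile. The paper runs a single unified computation: it notes that at most one line (WLOG $L_5$) can pass through the origin, proves that the normals of the \emph{non-concurrent} triple of hyperplanes $\{H_1,H_3,H_6\}$ are independent while every other normal lies in their span, concludes that the common perpendicular line $L=\mbb{R}e_4$ lies in all seven hyperplanes, identifies $L$ with $L_5$, and derives the contradiction from $L_5$ being coplanar with every other $L_i$. You instead split on whether every concurrent triple of normals is dependent. Your Case A is genuinely different from anything in the paper's proof of this proposition: you dualize, view the seven normal directions as points of $\mbb{P}^3(\mbb{R})$, observe they form a Sylvester--Gallai configuration, and invoke the real Sylvester--Gallai theorem to collapse everything into a $2$-plane --- elegant, though it imports a theorem the paper only deploys later (in the proof of the main theorem, after the transversal line is constructed). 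Your Case B is in the same coordinate-computation spirit as the paper but is anchored at the independent \emph{concurrent} triple $\{n_1,n_2,n_3\}$ rather than a non-concurrent one, and closes with the single identity $\det = \alpha'\tau (n_4)_4\neq 0$ (which I have checked) instead of producing the transversal line; the price is that your argument does not exhibit the line $L\subseteq\cap_i H_i$ that the paper's version hands to the general case. One small step you should spell out: the implication $P_1\notin\ell_j\Rightarrow (n_j)_4\neq 0$ is not a formal axiom of the representation but follows because $L_1\subset H_j$ would make $H_j$ equal to the $3$-dimensional affine hull of $L_1$ and a line of $\ell_j$, contradicting the distinctness of the hyperplanes; this is easily filled and is at the same level of implicitness as the paper's own treatment of the degenerate case $H_i=H_j$.
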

\begin{proof}
Let $L_i,1\leq i\leq 7$ be mutually skew lines in $\mbb{R}^4$ which is a $(1,3)$-representa-tion of the Fano plane. Let $H_1\sups L_1\cup L_2\cup L_3, H_2 \sups L_1\cup L_4\cup L_5, H_3\sups L_1\cup L_6\cup L_7, H_4\sups L_2\cup L_4 \cup L_6, H_5\sups L_2\cup L_5\cup L_7, H_6 \sups L_3\cup L_4\cup L_7, H_7\sups L_3\cup L_5\cup L_6$ where $H_i,1\leq i\leq 7$ are hyperplanes in $\mbb{R}^4$. These are the affine hulls of the skew lines. If $H_i=H_j$ for some $1\leq i\neq j\leq 7$ then we have $H_1=H_2=H_3=H_4=H_5=H_6=H_7$. So assume that all the $H_i$ are distinct and hence for $1\leq i\neq j\leq 7, \dim_{\mbb{R}}(H_i\cap H_j)=2$. We also have that 
$0\in \us{i=1}{\os{7}{\cap}}H_i$. Assume, without loss of generality, that $L_1,L_2,L_3,L_4,L_6,L_7$ do not pass through origin and $L_5$ may or may not pass through origin. Then we have $2=\dim_{\mbb{R}}(H_1\cap H_2\cap H_3)=\dim_{\mbb{R}}(H_1\cap H_4\cap H_5)=\dim_{\mbb{R}}(H_1\cap H_6\cap H_7)=\dim_{\mbb{R}}(H_2\cap H_4 \cap H_6)=\dim_{\mbb{R}}(H_3\cap H_5\cap H_6)=\dim_{\mbb{R}}(H_3\cap H_4\cap H_7)=2$ and $\dim_{\mbb{R}}(H_2\cap H_5\cap H_7)\leq 2$. Let $v_i\in \mbb{R}^4$ be a non-zero normal vector of $H_i,1\leq i\leq 7$. Then each of the following six sets $\{v_1,v_2,v_3\},\{v_1,v_4,v_5\},\{v_1,v_6,v_7\},\{v_2,v_4,v_6\},\{v_3,v_5,v_6\},\{v_3,v_4,v_7\}$ span a two dimensional space in $\mbb{R}^4$. If $v_1,v_3,v_6$ are linearly dependent then $H_1\cap H_3\cap H_6=H_1\cap H_3=H_1\cap H_6=H_3\cap H_6$ which implies that $L_1,L_3,L_7$ are coplanar which is a contradiction. So $v_1,v_3,v_6$ are linearly independent. If $v_1,v_3,v_6,v_4$ are linearly independent then by applying a suitable linear transformation in $GL_4(\mbb{R})$ we can assume that $v_1=(1,0,0,0),v_3=(0,1,0,0),v_6=(0,0,1,0), v_4=(0,0,0,1)$. Then we have $v_2=(a,b,0,0)$
with $ab\neq 0$ and similarly $v_2=(0,0,x,y)$ with $xy\neq 0$ which is contradiction.
So $v_4$ is linearly dependent on the linear independent set $\{v_1,v_3,v_6\}$.
Let us assume again that $v_1=(1,0,0,0),v_3=(0,1,0,0),v_6=(0,0,1,0), v_4=(\ga,\gb,\gga,0)$. We observe that the product $\ga\gb\gga\neq 0$. For example, if $\gga=0$ then $H_1\cap H_3 \cap H_4=H_1\cap H_3 =H_1\cap H_4=H_3\cap H_4$. This implies that $L_1,L_2,L_6$ are coplanar which is a contradiction. Similarly $\ga\neq 0\neq \gb$.
So we get now that $v_2=(c,d,0,0)=cv_1+dv_3=pv_6+qv_4=(q\ga,q\gb,q\gga+p,0)$ for some $c,d,p,q\in \mbb{R}^{*}=\mbb{R}\bs\{0\}$. So $p=-q\gga$ and $v_2=(q\ga,q\gb,0,0)$. Similarly we have $v_7=(r\ga,0,r\gga,0),v_5=(0,s\gb,s\gga,0)$ for some $r,s\in \mbb{R}^{*}$. Now the following matrix
\equ{\matthreefour {q\ga}{q\gb}{0}{0}{r\ga}{0}{r\gga}{0}{0}{s\gb}{s\gga}{0}}
has rank three because $qrs\ga\gb\gga\neq 0$. This implies that $H_2\cap H_5\cap H_7=L_5$ is one dimensional and $L_5$ must pass through origin.
Also note that the line $L=\{t(0,0,0,1)\mid t\in \mbb{R}\}$ is perpendicular in $\mbb{R}^4$ to all $v_i,1\leq i\leq 7$. Hence we have $L\subs \us{i=1}{\os{7}{\cap}}H_i$. This implies that 
$L=L_5$. Now it follows that $L_5$ is coplanar with all $L_i,1\leq i\leq 7$ which is a contradiction. Hence the proposition follows. 
\end{proof}
Now as a consequence, we have the following corollary when $\mid\Gs\mid=7$ in Theorem~\ref{theorem:Main} for $l=4$.
\begin{cor}
	\label{cor:Fano}
Suppose there is a finite family $\Gs$ of seven mutually skew lines in $\mbb{R}^4$ such that the three dimensional affine span (hull) of every two lines in $\Gs$, contains at least one more line of $\Gs$. Then we have that $\Gs$ is entirely contained in a three dimensional space if and only if the hyperplane arrangement of affine hulls is central.
\end{cor}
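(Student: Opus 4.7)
The forward direction is immediate: if $\Gs$ is contained in a three dimensional affine subspace $V\subset\mbb{R}^4$, then the affine hull of every two lines in $\Gs$ equals $V$, so all affine hulls coincide and their intersection is $V\neq\emptyset$.

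For the reverse direction I plan to translate $\mbb{R}^4$ so that a point common to all affine hulls becomes the origin; every affine hull is then a three dimensional linear subspace through the origin. The strategy is to identify the incidence structure carried by $\Gs$ and the collection of affine hulls, and then to reduce to the preceding Proposition. The key combinatorial step is the following dichotomy: either all seven lines already lie in a single three dimensional linear subspace (the desired conclusion), or the incidence structure is exactly that of the Fano plane $\mbb{P}^2(\mbb{F}_2)$ with lines of $\Gs$ as points and the three dimensional affine hulls as blocks.

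To establish this dichotomy I would let $k$ be the maximum number of lines of $\Gs$ contained in a single affine hull $H$, so $3\leq k\leq 7$ and $k=7$ is the first alternative. The rigidity used throughout is that two three dimensional affine hulls sharing a pair of skew lines must coincide, since the three dimensional affine span of a pair of skew lines is unique. For $k=6$, the unique outside line paired with any inside line forces its third line to lie in $H$, which makes the pair's hull equal to $H$, a contradiction. For $k=5$, the third-line condition sends every inside line into $\mathrm{hull}(L_6,L_7)$, putting all seven lines in one three dimensional subspace. For $k=4$, pigeonhole applied to the four hulls $\mathrm{hull}(L_i,L_5)$, $i=1,\ldots,4$, whose third lines must come from the two-element set $\{L_6,L_7\}$, forces two of these hulls to coincide and hence to equal $H$, again a contradiction. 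Once $k=3$ is reached, each affine hull contains exactly three lines and the standard count $\binom{7}{2}/\binom{3}{2}=7$ hulls, three hulls through each line, identifies the incidence structure as the Fano plane.

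Finally I would invoke the preceding Proposition: if the seven hulls are all distinct, its nonexistence statement for a $(1,3)$-representation of the Fano plane with linear affine hulls through the origin gives the contradiction; if on the other hand two of the seven hulls coincide, the Fano combinatorics force all seven to coincide, placing all seven lines in a single linear three dimensional subspace, which is precisely the desired conclusion. The main obstacle I anticipate is the combinatorial rigidity step, especially the case $k=4$, where pigeonhole must be combined carefully with the uniqueness of the three dimensional affine span of a pair of skew lines; the remaining cases $k=5,6$ and the final invocation of the Proposition are then comparatively routine.
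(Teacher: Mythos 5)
Your proof is correct and follows essentially the route the paper intends: reduce to the Proposition on the nonexistence of a Fano-plane $(1,3)$-representation with linear hulls through the origin. The paper merely asserts this corollary ``as a consequence'' (having claimed earlier that the Fano plane is the only scenario requiring examination for $n=7$), so your casework on the maximal number $k$ of lines in a single hull, and your handling of coincident hulls, supply exactly the combinatorial dichotomy the paper leaves implicit.
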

\section{\bf{Proof of the Main Theorem for $n=4$}}
In this section, we prove the main theorem when $n=4$. First we prove the following. 

\begin{lem}
\label{lemma:AffineHulls}
Let $\Gs$ be a finite family of mutually skew lines in $\mbb{R}^n$ such that the three dimensional affine span (hull) of every two lines in $\Gs$, contains at least one more line of $\Gs$. Then either all the lines are contained in one three dimensional space or every line is contained in at least three different three dimensional affine hulls.
\end{lem}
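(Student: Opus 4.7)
The plan is to argue the contrapositive: assume the family $\Gs$ is not contained in a single three-dimensional affine space, and prove that every $L\in\Gs$ lies in at least three distinct $3$-dimensional affine hulls of pairs from $\Gs$. Fix $L\in\Gs$; the relevant hulls containing $L$ are precisely the affine spans $H(L,L')$ as $L'$ ranges over $\Gs\setminus\{L\}$.

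First, I rule out that $L$ lies in a single such hull $H$: in that case every $L'\in\Gs\setminus\{L\}$ would be contained in $H$, and $L\subs H$ as well, placing all of $\Gs$ inside the $3$-dimensional space $H$, contrary to the standing assumption. Hence $L$ lies in at least two hulls. Suppose now, toward a contradiction, that $L$ lies in exactly two distinct hulls $H_1\neq H_2$, and set
\equ{\Gs_i = \{L'\in\Gs\setminus\{L\} : L'\subs H_i\}, \qquad i=1,2.}
By assumption $\Gs_1\cup\Gs_2=\Gs\setminus\{L\}$ and both $\Gs_i$ are non-empty. I claim they are disjoint: if $L'\in\Gs_1\cap\Gs_2$, then $L,L'\subs H_1\cap H_2$, which is an affine subspace of dimension at most $2$ since $H_1\neq H_2$; because it already contains $L$ its dimension is $1$ or $2$, so either $L'=L$ or $L,L'$ are coplanar, each contradicting that $L,L'$ are distinct skew lines. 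Therefore $\Gs\setminus\{L\}=\Gs_1\sqcup\Gs_2$.

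Now pick $L_1\in\Gs_1$ and $L_2\in\Gs_2$. These two are skew, so $H(L_1,L_2)$ is $3$-dimensional, and by hypothesis it contains some further line $L_3\in\Gs$ with $L_3\neq L_1,L_2$. I consider three cases and derive a contradiction in each. If $L_3=L$, then $L$ and $L_1$ are skew lines both contained in $H(L_1,L_2)$, so $H(L_1,L_2)=H(L,L_1)=H_1$, forcing $L_2\in H_1$ and contradicting the disjointness of $\Gs_1,\Gs_2$. If $L_3\in\Gs_1$, the skew pair $L_1,L_3$ lies in both $H_1$ and $H(L_1,L_2)$, so these two $3$-dimensional hulls coincide and again $L_2\in H_1$, a contradiction. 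The case $L_3\in\Gs_2$ is symmetric and places $L_1\in H_2$. Thus every configuration with exactly two hulls is impossible, so $L$ lies in at least three distinct hulls, and the lemma follows.

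The principal subtlety is the disjointness $\Gs_1\cap\Gs_2=\es$, which depends on the elementary but crucial observation that two distinct $3$-dimensional affine spaces sharing a line meet in at most a plane and that skew lines are never coplanar. Once this is in hand, the ``third line'' axiom supplied by the hypothesis forces $H(L_1,L_2)$ to coincide with one of the $H_i$, collapsing any hypothetical ``exactly two hulls'' configuration into the degenerate one-hull case already ruled out. No further ingredient about the ambient dimension $n$ is needed, so the argument goes through uniformly.
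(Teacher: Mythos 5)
Your proof is correct and follows essentially the same route as the paper's: partition $\Gs\setminus\{L\}$ into the lines lying in $H_1$ and those lying in $H_2$, take one line from each part, and observe that the third line guaranteed in their affine hull forces that hull to coincide with $H_1$ or $H_2$, collapsing the ``exactly two hulls'' configuration. You are somewhat more explicit than the paper about the disjointness of the partition and the case analysis, but the underlying argument is identical.
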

\begin{proof}
Let $L$ be a line which is contained in exactly two three dimensional affine hulls $H_1$ and $H_2$. Then we have a partition of the set $\Gs\bs \{L\}=\Gs_1\cup \Gs_2$ where the line $L$ and all the lines in $\Gs_1$ are contained in three dimensional affine hull $H_1$ and the same line $L$ and all the lines in $\Gs_2$ are contained in another three dimensional hull $H_2$. Now we have $\mid \Gs_i\mid \geq 3,i=1,2$. Let $L_i\in \Gs_i,i=1,2$. Then the affine hull $\Gd $ of $L_1$ and $L_2$ must contain a third line from $\Gs$. So either the set $\Gd \cap (\Gs_1\cup\{L\})$ or the set $\Gd \cap (\Gs_2\cap\{L\})$ must have cardinality at least two which is a contradiction. Hence the lemma follows.     	
\end{proof}

Now we state a very important theorem which is required to prove main Theorem~\ref{theorem:Main} for $n=4$.
\begin{theorem}
\label{theorem:CoplanarLine}
Suppose there is a finite family $\Gs=\{L_1,L_2,\cdots,L_n\}$ of mutually skew lines in $\mbb{R}^4$ such that the three dimensional affine span (hull) of every two lines in $\Gs$, contains at least one more line of $\Gs$. Let $\{H_1,H_2,\cdots,H_m\}$ be the distinct three dimensional affine hulls and $m\geq 2$. If the hyperplane arrangement of the affine hulls is central, that is, $O\in \us{i=1}{\os{m}{\cap}} H_i$, then there exists a line $L\subs \mbb{R}^4$ different from  $L_i$ and passing through $O$ such that the line $L_i$ and $L$ are coplanar for each $1\leq i\leq n$.
\end{theorem}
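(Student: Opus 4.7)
The plan is to dualize the configuration into $\mathbb{P}^{3}(\mathbb{R})$ and apply the classical fact that a finite family of pairwise meeting projective lines in $\mathbb{P}^{3}$ is either concurrent or coplanar. For each $L_{i} \in \Sigma$ not passing through $O$, let $V_{i} \subset \mathbb{R}^{4}$ be the $2$-dimensional linear subspace containing $L_{i}$ and $O$, and set $J_{i} = \{j : L_{i} \subseteq H_{j}\}$. The centrality hypothesis gives $V_{i} \subseteq H_{j}$ for every $j \in J_{i}$, and since $m \geq 2$ Lemma~\ref{lemma:AffineHulls} yields $|J_{i}| \geq 3$; any two distinct $H_{j}, H_{j'}$ containing $L_{i}$ intersect in a $2$-dimensional subspace which must be $V_{i}$, so $V_{i} = \bigcap_{j \in J_{i}} H_{j}$. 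Choosing a normal $n_{j}$ for each $H_{j}$ and putting $N_{i} = V_{i}^{\perp}$, one obtains in the dual $\mathbb{P}^{3}(\mathbb{R}) = \mathbb{P}((\mathbb{R}^{4})^{*})$ points $P_{j} = [n_{j}]$ and, for each non-central $L_{i}$, a projective line $\ell_{i} = \mathbb{P}(N_{i})$ passing through $P_{j}$ exactly when $j \in J_{i}$.

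For any two non-central lines $L_{a}, L_{b}$, the normal of their common hull lies in $N_{a} \cap N_{b}$, so $\ell_{a}$ and $\ell_{b}$ meet in $\mathbb{P}^{3}(\mathbb{R})$. By the dichotomy the $\ell_{i}$'s are either concurrent at a single point $q^{*} = [\nu]$ or coplanar in a single projective plane $\mathbb{P}(M)$ with $\dim_{\mathbb{R}} M = 3$. I rule out the concurrent case: $V_{i} \subseteq \ker \nu$ for every non-central $i$. If no $L_{i}$ passes through $O$, all of $\Sigma$ lies in the $3$-dimensional $\ker \nu$, forcing $m = 1$ and contradicting $m \geq 2$; if some $L_{k}$ passes through $O$ with $L_{k} \subseteq \ker \nu$, the same contradiction arises. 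If $L_{k} \not\subseteq \ker \nu$, the hull $H_{ak}$ of $L_{k}$ and any non-central $L_{a}$ differs from $\ker \nu$ and so meets it in a $2$-dimensional subspace; the third line $L_{b}$ of $\Sigma$ in $H_{ak}$ (supplied by the hypothesis) is non-central, whence $L_{b} \subseteq H_{ak} \cap \ker \nu$ together with $L_{a}$, contradicting skewness.

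Thus the $\ell_{i}$'s are coplanar in $\mathbb{P}(M)$. Since every hull $H_{j}$ contains at least one non-central line $L_{a}$ (at most one line of $\Sigma$ passes through $O$), we have $n_{j} \in N_{a} \subseteq M$, hence $\sum_{j} \mathbb{R} n_{j} \subseteq M$ and $W := \bigcap_{j=1}^{m} H_{j} \supseteq M^{\perp}$, so $\dim_{\mathbb{R}} W \geq 1$. For any nonzero $\ell \in M^{\perp}$ one has $\ell \in V_{i}$ for every non-central $L_{i}$, so $L := \mathbb{R}\ell$ is a line through $O$ coplanar with each non-central $L_{i}$ inside $V_{i}$, and trivially coplanar with a possible $L_{k}$ through $O$ inside the $2$-plane $\mathrm{span}_{\mathbb{R}}(L \cup L_{k})$. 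For non-central $L_{i}$, the inequality $L \neq L_{i}$ is automatic because $L$ passes through $O$ and $L_{i}$ does not.

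The main obstacle is guaranteeing $L \neq L_{k}$ in the case that some $L_{k} \in \Sigma$ passes through $O$: this requires $\ell \in M^{\perp} \setminus L_{k}$, which is possible unless $M^{\perp} = L_{k}$. But $M^{\perp} = L_{k}$ is equivalent to $M = L_{k}^{\perp}$, i.e.\ $L_{k} \subseteq H_{j}$ for every $j$; then for any non-central $L_{a}$, every hull containing $L_{a}$ would also contain $L_{k}$ and would be forced to coincide with the unique $3$-dimensional hull of $L_{a}$ and $L_{k}$, giving $|J_{a}| = 1$ and contradicting Lemma~\ref{lemma:AffineHulls}. Hence $M^{\perp} \neq L_{k}$, a suitable $\ell \in M^{\perp} \setminus L_{k}$ can be chosen, and $L = \mathbb{R}\ell$ is the required line.
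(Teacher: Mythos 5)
Your proof is correct, and it takes a genuinely different route from the paper's. The paper fixes coordinates, writes explicit normal vectors $v_1=(1,0,0,0)$, $v_{b_1}=(0,1,0,0)$, $v_{b_2}=(0,0,1,0)$, and shows through a sequence of rank computations (organized by an elaborate bookkeeping of which lines lie in which hulls) that every normal $v_j$ lies in the three\-/dimensional span of $\{v_1,v_{b_1},v_{b_2}\}$; the desired line is then the common orthogonal complement. You instead pass to the dual $\mbb{P}^3(\mbb{R})$, attach to each non-central $L_i$ the projective line $\ell_i=\mbb{P}(V_i^{\perp})$ through the dual points of the hulls containing it, check that the $\ell_i$ are distinct (skewness forces $V_a\neq V_b$) and pairwise meeting (the normal of the common hull lies in $N_a\cap N_b$), and invoke the classical dichotomy that pairwise-meeting lines in $\mbb{P}^3$ are concurrent or coplanar. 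Your elimination of the concurrent case is sound: concurrency at $[\nu]$ would put every non-central line inside the hyperplane $\ker\nu$, whence either $m=1$ or the third-line hypothesis applied to the hull of $L_k$ and $L_a$ places two skew lines in the $2$-plane $H_{ak}\cap\ker\nu$. Coplanarity of the $\ell_i$ is exactly the paper's conclusion that the normals span at most three dimensions. What your approach buys: it replaces the coordinate computations and heavy indexing with one clean classical fact, and it supplies an honest argument for the step the paper merely asserts, namely that $L\neq L_n$ when a line of $\Gs$ passes through the origin (your observation that $M^{\perp}=L_k$ would force $|J_a|=1$, contradicting Lemma~\ref{lemma:AffineHulls}, fills this gap). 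To make the write-up self-contained you should add a one-line proof or a precise reference for the concurrent-or-coplanar dichotomy, and note that at least two non-central lines exist (at most one line of a skew family passes through $O$, and $n\geq 7$), so the dichotomy is applied non-vacuously.
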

We first prove the main theorem for $l=4$ using this Theorem~\ref{theorem:CoplanarLine}.
\begin{proof}[Proof of the main theorem for $l=4$]
Suppose there are atleast two distinct affine hulls. Using Theorem~\ref{theorem:CoplanarLine} let $O$ be the origin and $L=\{t(0,0,0,1)\mid t\in \mbb{R}\}$. 
Let $P_i$ be the plane containing origin spanned by the lines $L$ and $L_i$ for $1\leq i\leq n$. We have by $P_i\neq P_j,1\leq i\neq j\leq n$ because the lines $L_i,1\leq i\leq n$ are mutually skew. Now we project $\mbb{R}^4$ perpendicular to the line $L$ onto $\mbb{R}^3$. Let $\gp:\mbb{R}^4\lra \mbb{R}^3,\gp(x,y,z,t)=(x,y,z)$ be the projection. Then we get a finite configuration of $n$ points $\{\gp(P_i)\mid 1\leq i\leq n\}\subs \mbb{P}^2(\mbb{R})$. The points $\{\gp(P_i)\mid L_i\in \Gl\}$ are collinear in $\mbb{P}^2(\mbb{R})$ where $\Gl$ is the set of lines in a affine hull of a pair of mutually skew lines. Hence the configuration forms an SGC in $\mbb{P}^2(\mbb{R})$. Now by using the basic Sylvester-Gallai theorem we conclude that all points $\gp(P_i),1\leq i\leq n$ are collinear in $\mbb{P}^2(\mbb{R})$. Hence all the lines $L_1,L_2,\cdots,L_n$ lie in one three dimensional space containing origin which is a contradiction to the existence of at least two distinct affine hulls. This completes the proof of the main theorem.	
\end{proof}
Now we prove Theorem~\ref{theorem:CoplanarLine}. 
\begin{proof}
Let $L_1,L_2,\cdots,L_n$ be $n$ mutually skew lines in $\mbb{R}^4$ for some $n\geq 7$. In fact we can even assume that $n\geq 8$ using Corollary~\ref{cor:Fano} because there is at least two distinct affine hulls, all of them containing origin.
So using Lemma~\ref{lemma:AffineHulls} we have that, each line $L_i,1\leq i\leq n$ is in at least three different three dimensional affine hulls. Let us assume that the affine hulls of all these lines are $H_1,H_2,\cdots,H_m$ which are given as follows without loss of generality by gathering as much definite set theoretic knowledge as possible about the configuration.
\equa{H_1 &\os{\sups}{\llra} \{L_1,L_2,L_3,\cdots,L_{a_1}\}=\Gs_1 \\
H_2 &\os{\sups}{\llra} \{L_1,L_{a_1+1},L_{a_1+2},\cdots,L_{a_2}\}=\Gs_2\\
H_3 &\os{\sups}{\llra} \{L_1,L_{a_2+1},L_{a_2+2},\cdots,L_{a_3}\}=\Gs_3\\
\vdots &\llra \hspace{2cm} \vdots\\
	H_{k=b_1}&\os{\sups}{\llra} \{L_1,L_{a_{k-1}+1},L_{a_{k-1}+2},\cdots,L_{a_k=n}\}=\Gs_{b_1}\\
	H_{b_1+1}&\os{\sups}{\llra} \{L_2,L_{a_1+1},\cdots\}=\Gs_{b_1+1}\\
	H_{b_1+2}&\os{\sups}{\llra} \{L_2,L_{a_1+2},\cdots\}=\Gs_{b_1+2}\\	
		\vdots &\llra \hspace{1cm} \vdots\\
H_{b_2}&\os{\sups}{\llra} \{L_2,\cdots\}=\Gs_{b_2}\\
H_{b_2+1}&\os{\sups}{\llra} \{L_3,L_{a_1+1},\cdots\}=\Gs_{b_2+1}\\
H_{b_2+2}&\os{\sups}{\llra} \{L_3,L_{a_1+2},\cdots\}=\Gs_{b_2+2}\\
\vdots &\llra \hspace{1cm} \vdots\\
H_{b_3}&\os{\sups}{\llra} \{L_3,\cdots\}=\Gs_{b_3}\\
\vdots &\llra \hspace{1cm} \vdots\\
H_{b_{a_1-1}}&\os{\sups}{\llra} \{L_{a_1-1},\cdots\}=\Gs_{b_{a_1-1}}\\
H_{b_{a_1-1}+1}&\os{\sups}{\llra} \{L_{a_1},L_{a_1+1},\cdots\}=\Gs_{b_{a_1-1}+1}\\
H_{b_{a_1-1}+2}&\os{\sups}{\llra} \{L_{a_1},L_{a_1+2},\cdots\}=\Gs_{b_{a_1-1}+2}\\
\vdots &\llra \hspace{1cm} \vdots\\
H_{b_{a_1}}&\os{\sups}{\llra} \{L_{a_1},\cdots\}=\Gs_{b_{a_1}}\\
\vdots &\llra \hspace{1cm} \vdots\\
H_m&\os{\sups}{\llra} \{L_{*},\cdots\}=\Gs_{m}.}
Here we observe the following.
\equa{L_1 &\os{\subs}{\llra} \{H_1,H_2,\cdots,H_{b_1}\}=\Gd_1\\
L_2 &\os{\subs}{\llra} \{H_1,H_{b_1+1},\cdots,H_{b_2}\}=\Gd_2\\
L_3 &\os{\subs}{\llra} \{H_1,H_{b_2+1},\cdots,H_{b_3}\}=\Gd_3}
\equa{\vdots &\llra \hspace{2cm} \vdots\\
L_{a_1} &\os{\subs}{\llra} \{H_1,H_{b_{a_1-1}+1},\cdots,H_{b_{a_1}}\}=\Gd_{a_1}\\
L_{a_1+1} &\os{\subs}{\llra} \{H_2,H_{b_1+1},H_{b_2+1},\cdots,H_{b_{a_1-1}+1},\cdots\}=\Gd_{a_1+1}\\
L_{a_1+2} &\os{\subs}{\llra} \{H_2,H_{b_1+2},H_{b_2+2},\cdots,H_{b_{a_1-1}+2},\cdots\}=\Gd_{a_1+2}\\
\vdots &\llra \hspace{2cm} \vdots\\
L_n &\os{\subs}{\llra} \{H_{b_1},\cdots\}=\Gd_n.}
Here we have $b_1=k,a_k=n$ to avoid notation of repeated subscripts. We have $\mid\Gs_i\cap\Gs_j\mid\leq 1,1\leq i\neq j\leq m$ and each $H_i$ is the affine hull of any two lines in $\Gs_i,1\leq i\leq m$. 
We have that all the $H_i,1\leq i\leq m$ are distinct and hence $\dim_{\mbb{R}}(H_i\cap H_j)=2,1\leq i\neq j\leq m$. We also have that $0\in \us{i=1}{\os{m}{\cap}}H_i$.   We also assume that $L_1,L_2,\ldots,L_{n-1}$ does not pass through origin and $L_n$ may or may not pass through origin.
Hence we have $\dim_{\mbb{R}}(\us{j\in \Gd_i}{\cap} H_j)=2,1\leq i\leq n-1$ and $\dim_{\mbb{R}}(\us{j\in \Gd_n}{\cap} H_j)\leq 2$.
\begin{claim}
Let $v_j$ be a normal vector of the hyperplane $H_j,1\leq j\leq m$. The set $\{v_1,v_{b_1},v_{b_2}\}$ is linearly independent and $v_j,1\leq j\leq m$ is linearly dependent on the set $\{v_1,v_{b_1},v_{b_2}\}$.
\end{claim}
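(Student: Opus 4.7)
The plan is to establish linear independence of $\{v_1, v_{b_1}, v_{b_2}\}$ and then show every $v_j$ lies in their span $V$. For independence, since $L_1 \subset H_1 \cap H_{b_1}$ and these are distinct $3$-dimensional linear subspaces, $H_1 \cap H_{b_1}$ is $2$-dimensional and contains the $2$-dim linear span $W_1$ of $L_1$, forcing $W_1 = H_1 \cap H_{b_1}$. Symmetrically $W_2 = H_1 \cap H_{b_2}$. Since $L_1, L_2$ are skew, $W_1 \ne W_2$, and two distinct $2$-planes inside the $3$-space $H_1$ intersect in a line, so $H_1 \cap H_{b_1} \cap H_{b_2} = W_1 \cap W_2$ is $1$-dimensional. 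This is equivalent to $v_1, v_{b_1}, v_{b_2}$ being linearly independent.

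Set $V = \mathrm{span}(v_1, v_{b_1}, v_{b_2})$. I would first handle two base cases: if $j \in \Gd_1$ then $v_j \in W_1^\perp$, and since $v_1, v_{b_1}$ are linearly independent in the $2$-dim $W_1^\perp$ they form a basis of it, giving $v_j \in V$; the symmetric argument handles $j \in \Gd_2$. The key propagation step then handles every line $L_k$ with $k > a_1$: the affine hulls of $(L_1, L_k)$ and $(L_2, L_k)$ are hyperplanes $H_{j_1}, H_{j_2}$ in our list, with $j_1 \in \Gd_1$, $j_2 \in \Gd_2$ and hence $v_{j_1}, v_{j_2} \in V$. They are linearly independent, for otherwise $H_{j_1} = H_{j_2}$ would contain $L_1, L_2, L_k$, forcing this common hyperplane to equal $H_1$ and contradicting $L_k \not\subset H_1$. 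Both lie in the $2$-dim $W_k^\perp$, so they span it, and consequently $W_k^\perp \subset V$. This yields $v_j \in V$ for every $j \in \Gd_k$.

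The final case covers $L_i \in \Gs_1 \bs \{L_1, L_2\}$, i.e.\ $3 \le i \le a_1$. For $j \in \Gd_i$ with $j \ne 1$, the hyperplane $H_j \ne H_1$ contains $L_i$ and some other line $L_k$; here $k > a_1$, because if $L_i, L_k$ were both in $\Gs_1 \subset H_1$ their $3$-dim affine hull would equal $H_1$, forcing $H_j = H_1$. The propagation step then gives $v_j \in W_k^\perp \subset V$, while $j = 1$ is trivial. I expect the main obstacle to be keeping the bookkeeping clean through these case distinctions; the crucial observation underlying both the independence of $v_{j_1}, v_{j_2}$ above and the identification $H_j = H_1$ here is that every $3$-dim hyperplane $H_j$ in the list equals the affine hull of any pair of skew lines it contains, a point that must be invoked consistently throughout.
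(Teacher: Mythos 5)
Your proof is correct and rests on the same two ingredients as the paper's --- that the normals of all hyperplanes containing a fixed line $L_i$ not through the origin span exactly the $2$-dimensional space $W_i^{\perp}$, and the propagation to a line $L_k\not\subset H_1$ via the two affine hulls of $(L_1,L_k)$ and $(L_2,L_k)$ --- but it organizes them more economically. The paper first establishes $v_{b_3},\ldots,v_{b_{a_1}}\in V$ by a separate contradiction: it assumes $v_1,v_{b_1},v_{b_2},v_{b_3}$ independent, normalizes them to the standard basis, and shows a rank-$3$ matrix of normals would force the line $L_{a_1+1}$ through the origin; only afterwards does it treat the hyperplanes through lines outside $H_1$. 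Your base-case/propagation scheme makes that coordinate computation unnecessary, since you reach the hyperplanes of $\Gd_3,\ldots,\Gd_{a_1}$ indirectly through the lines of index $>a_1$ that they contain; that is a genuine streamlining. One point does need a word of care: you state the propagation step for every $k>a_1$, but when $k=n$ and $L_n$ happens to pass through the origin, $W_n$ is only $1$-dimensional, $W_n^{\perp}$ is $3$-dimensional, and two independent normals no longer span it. The fix is immediate --- any hyperplane other than $H_1$ contains at most one line of $\Gs_1$ and hence at least two lines of index $>a_1$, so at least one with index $<n$, and you may always choose $k\le n-1$ --- but it should be said explicitly, just as the paper is careful to note that its conclusion holds irrespective of whether $\dim_{\mbb{R}}(\us{j\in\Gd_n}{\cap}H_j)$ equals $0$, $1$ or $2$.
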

\begin{proof}[Proof of Claim]
For $1\leq i\leq n-1$, the set $\{v_j\mid H_j\in \Gd_i\}\subs \mbb{R}^4$ spans a two dimensional space.  Now the set $\{v_1,v_{b_1},v_{b_2}\}$ is linearly independent. Suppose not, then $H_1\cap H_{b_1}\cap H_{b_2}=H_1\cap H_{b_1}=H_1\cap H_{b_2}$. This implies the lines $L_1,L_2$ are coplanar which is a contradiction. Now the vector $v_{b_3}$  is linearly dependent on the set $\{v_1,v_{b_1},v_{b_2}\}$. Suppose not then by applying a suitable linear transformation in $GL_4(\mbb{R})$ we can assume that $v_1=(1,0,0,0),v_{b_1}=(0,1,0,0),v_{b_2}=(0,0,1,0),v_{b_3}=(0,0,0,1)$. Since $n\geq 4$ we have $\dim_{\mbb{R}}(\us{j\in \Gd_i}{\cap} H_j)=2,i=1,2,3$ and hence $v_2=(a,b,0,0),v_{b_1+1}=(c,0,d,0),v_{b_2+1}=(e,0,0,f)$ with $a,b,c,d,e,f\in \mbb{R}\bs\{0\}$. Now the rank of the matrix 
\equ{\matthreefour{a}{b}{0}{0}{c}{0}{d}{0}{e}{0}{0}{f}} is three. Hence the space $H_2\cap H_{b_1+1}\cap H_{b_2+1}$ is exactly one dimensional and contains the line $L_{a_1+1}$. But $n>a_1+1$ and the line $L_{a_1+1}$ does not pass through origin which is a contradiction. So $v_{b_3}$ is linearly dependent on the set $\{v_1,v_{b_1},v_{b_2}\}$. Similarly by applying the same argument, we conclude that
$v_{b_j}$ is linearly dependent on the set $\{v_1,v_{b_1},v_{b_2}\}$ for $3\leq j\leq a_1$ since $H_2\cap H_{b_1+1}\cap H_{b_{j-1}+1}$ contains the line $L_{a_1+1}$. Now $v_i$ is linearly dependent on $\{v_1,v_{b_j}\}$ for $b_{j-1}+1\leq i\leq b_j-1,3\leq j\leq a_1$ and hence we obtain that $v_i$ is linearly dependent on the set $\{v_1,v_{b_1},v_{b_2}\}$ for $1\leq i\leq b_{a_1}$. Now we observe the following. 
For any $2\leq i\leq n$ there exists a unique $j\in \{1,2,\cdots,b_1\}$ such that $L_i\subs H_j$. Similarly for $1\leq i\leq n,i\neq 2$ there exists a unique $j\in \{1,b_1+1,b_1+2,\cdots, b_2\}$ such that $L_i\subs H_j$.  Now we observe that the set $\Gd_n\subs \us{i=1}{\os{n-1}{\cup}} \Gd_i$. So by applying the dual argument we get that, for $a_1+1\leq i\leq n-1$, the set $\Gd_i$ contains two hyperplanes one from the set $\Gd_1$ and one from the set $\Gd_2$ different from $H_1$. The normal vector of any hyperplane in $\Gd_i$ is linearly dependent on the normal vectors of those two hyperplanes in $\Gd_i$ coming from $\Gd_1$ and $\Gd_2$. So for $b_{a_1}+1\leq i\leq m, v_i$ is linearly dependent on the set $\{v_1,v_{b_1},v_{b_2}\}$. This can be concluded irrespective of the apriori fact that $\dim_{\mbb{R}}(\us{j\in \Gd_n}{\cap} H_j)=0$ or $1$ or $2$.  This completes the proof of the claim.
\end{proof}

Without loss of generality by applying a linear transformation in $GL_4(\mbb{R})$, let  $v_1=(1,0,0,0),v_{b_1}=(0,1,0,0),v_{b_2}=(0,0,1,0)$. Then we get using the claim that, the line $\{t(0,0,0,1)\mid t\in \mbb{R}\}=L\subs \us{i=1}{\os{m}{\cap}} H_i$. Now for $1\leq i\leq n$, the line $L_i$ and the line $L$ are contained in a two dimensional plane $H_r\cap H_s$ for any $r\neq s$ such that $H_r,H_s\in \Gd_i$. So in that plane the line $L_i$ and $L$ are coplanar. Now $L\neq L_i,1\leq i\leq n-1$ because $L$ passes through origin and $L_i$ does not. We also have that $L\neq L_n$.

This completes the proof of Theorem~\ref{theorem:CoplanarLine}.
\end{proof}
\section{\bf{Proof of the Main Theorem in General}}
Now state the analogous statement of Theorem~\ref{theorem:CoplanarLine} in higher dimensions. 
\begin{theorem}
	\label{theorem:CoplanarLineinHigherDimensions}
	Let $l\geq 4$ be an integer.
	Suppose there is a finite family $\Gs=\{L_1,L_2,\cdots,$ $L_n\}$ of mutually skew lines in $\mbb{R}^l$ such that the three dimensional affine span (hull) of every two lines in $\Gs$, contains at least one more line of $\Gs$. Let $\{H_1,H_2,\cdots,H_m\}$ be the distinct three dimensional affine hulls and $m\geq 2$. If the hyperplane arrangement of affine hulls is central, that is, $O\in \us{i=1}{\os{m}{\cap}} H_i$, then there exists a line $L\subs \mbb{R}^l$ different from  $L_i$ and passing through $O$ such that the line $L_i$ and $L$ are coplanar for each $1\leq i\leq n$.
\end{theorem}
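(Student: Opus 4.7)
The plan is to adapt the proof of Theorem~\ref{theorem:CoplanarLine} from $\mbb{R}^4$ to arbitrary $\mbb{R}^l$ with $l \geq 4$. The crucial geometric observation, valid in any ambient dimension, is: for two distinct three-dimensional hulls $H_r, H_s$ through origin that both contain a line $L_i$ not through origin, one has $H_r \cap H_s = M_i := \text{span}(L_i, 0)$, the unique $2$-dimensional linear subspace containing $L_i$. This is because both hulls contain $M_i$, forcing $\dim(H_r \cap H_s) \geq 2$, while distinctness of the two $3$-dim subspaces forces $\dim(H_r \cap H_s) \leq 2$.

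I would begin with the same indexing of lines and hulls as in the proof of Theorem~\ref{theorem:CoplanarLine}. By Lemma~\ref{lemma:AffineHulls} each line lies in at least three hulls; by Corollary~\ref{cor:Fano} we may assume $n \geq 8$; and WLOG $L_1, \ldots, L_{n-1}$ do not pass through origin. Setting $L := M_1 \cap M_2 \subset H_1$, skewness of $L_1, L_2$ in the $3$-dim hull $H_1$ forces $M_1 \neq M_2$ (else $L_1, L_2$ would be coplanar), so $L$ is a $1$-dim line through origin. By the observation above, $L \subset M_1 = H_1 \cap H_{b_1}$ and $L \subset M_2 = H_1 \cap H_{b_2}$, so $L \subseteq H_1 \cap H_{b_1} \cap H_{b_2}$.

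The heart of the argument is to verify $L \subseteq H_j$ for every remaining hull $H_j$. For $H_j \in \Delta_1 \cup \Delta_2$, the geometric observation gives $L \subset M_1 \subset H_j$, respectively $L \subset M_2 \subset H_j$. For $H_j \notin \Delta_1 \cup \Delta_2$, I would adapt the rank-contradiction argument from the proof of Theorem~\ref{theorem:CoplanarLine}. Suppose some $H_{b_3}$ fails to contain $L$. In $\mbb{R}^4$, this means $v_{b_3}$ is independent of $\{v_1, v_{b_1}, v_{b_2}\}$; taking them as standard basis vectors and computing, the intersection $H_2 \cap H_{b_1+1} \cap H_{b_2+1}$ (which contains the line $L_{a_1+1}$ by indexing) is exactly $1$-dim via a $3 \times 4$ rank computation, forcing $L_{a_1+1}$ through origin, a contradiction. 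In $\mbb{R}^l$ with $l > 4$, the three-hull intersection instead equals $M_{a_1+1}$ ($2$-dim) by the geometric observation, so we must use more reference hulls: since by indexing the line $L_{a_1+1}$ lies in one hull from each $\Delta_j$ for $j = 1, \ldots, a_1$, we intersect sufficiently many of them (together with $H_{b_3}$) and argue via dimension counting that the intersection is cut down to $1$-dim, forcing the same contradiction. Once this is done for $H_{b_3}$, the inductive propagation to arbitrary $H_j$ follows as in Theorem~\ref{theorem:CoplanarLine}.

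Once $L \subseteq \bigcap_j H_j$ is established, the conclusion follows as in Theorem~\ref{theorem:CoplanarLine}: for each $i$, pick distinct $H_r, H_s \in \Delta_i$ (possible since $|\Delta_i| \geq 3$); then $L \subseteq H_r \cap H_s = M_i$, so $L$ and $L_i$ are coplanar in $M_i$. The inequality $L \neq L_i$ holds because $L$ passes through origin but $L_i$ does not (for $i < n$); the case $i = n$ is handled separately as in Theorem~\ref{theorem:CoplanarLine}. The main obstacle is the extended rank argument: in $\mbb{R}^l$ for $l > 4$, the straightforward three-hull intersection is too large to yield the contradiction, so one must carefully choose and count enough reference hulls to force the intersection to collapse, replacing the clean $3 \times 4$ matrix computation of the $l = 4$ case with a more elaborate dimension-counting scheme that exploits the full combinatorial incidence structure between lines and hulls.
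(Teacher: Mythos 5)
Your setup, your choice of the line $L = M_1\cap M_2 = H_1\cap H_{b_1}\cap H_{b_2}$, and your endgame all match the paper's proof, but the step you yourself flag as ``the main obstacle'' --- verifying $L\subseteq H_j$ for hulls $H_j\notin \Gd_1\cup\Gd_2$ --- is precisely the content of the theorem when $l>4$, and the route you sketch for it cannot be completed. You propose to generalize the $\mbb{R}^4$ rank--contradiction by intersecting ``sufficiently many'' reference hulls through $L_{a_1+1}$ until the intersection collapses to dimension $1$, forcing $L_{a_1+1}$ through the origin. But every hull in $\Gd_{a_1+1}$ contains the $2$-plane $M_{a_1+1}=\mathrm{span}(L_{a_1+1},O)$, so $\bigcap_{H\in\Gd_{a_1+1}}H\supseteq M_{a_1+1}$ no matter how many hulls you intersect; the intersection can never drop below dimension $2$, and the intended contradiction is unobtainable. (In $\mbb{R}^4$ the collapse was forced only because the ambient space cannot hold four independent normal vectors --- a phenomenon with no analogue for $l>4$.) As written, the proof therefore has a genuine gap at its central step.

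The fix is already implicit in your own ``geometric observation,'' and it is what the paper does: a direct propagation with no contradiction. Since $H_2\supseteq M_1\supseteq L$ and $H_{b_1+1}\supseteq M_2\supseteq L$, and since $H_2,H_{b_1+1}$ are distinct hulls through $L_{a_1+1}$ (a line missing the origin because $a_1+1\le n-1$), your observation gives $L\subseteq H_2\cap H_{b_1+1}=M_{a_1+1}\subseteq H_{b_2+1}$; then $L\subseteq H_1\cap H_{b_2+1}=M_3\subseteq H_{b_3}$, and the same two-step relay --- through one hull of $\Gd_1$ and one of $\Gd_2$, which every $\Gd_i$ with $a_1+1\le i\le n-1$ contains --- reaches every remaining hull. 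The paper phrases this dually via the orthogonal complements $V_i=H_i^{\perp}$: it shows $\dim_{\mbb{R}}(V_1+V_{b_1}+V_{b_2})=l-1$ and then runs the chain $V_{b_3}\subsetneq V_1+V_{b_2+1}\subseteq V_1+V_2+V_{b_1+1}=V_1+V_{b_1}+V_{b_2}$, using only the equalities $V_r+V_s=V_{r'}+V_{s'}$ for hulls in a common $\Gd_p$. So what is missing from your argument is not a heavier dimension count but the realization that the identities $H_r\cap H_s=M_p$ chain together directly.
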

We first prove the main theorem using this Theorem~\ref{theorem:CoplanarLineinHigherDimensions}.
\begin{proof}[Proof of the main theorem]
	This proof is similar to the proof of the main theorem for the value $l=4$ given in the previous section.
\end{proof}
Now we prove Theorem~\ref{theorem:CoplanarLineinHigherDimensions}.
\begin{proof}
Just similar to the proof of Theorem~\ref{theorem:CoplanarLine}, here also we have a similar set theoretic knowledge of the lines and their affine hulls. We assume as usual that $O$ is the origin and $L_i,1\leq i\leq n-1$ do not pass through origin and $L_n$ may or may not pass through origin.
Here we have that all the affine hulls $H_i,1\leq i\leq m$ are distinct and $\dim_{\mbb{R}}(H_i\cap H_j)=2$ for all $i\neq j$ such that $H_i,H_j\in \Gd_p$ for any $1\leq p\leq n-1$ and in general we have $\dim_{\mbb{R}}(H_i\cap H_j)\leq 2$ unlike the case when $l=4$. It follows that $\dim_{\mbb{R}}(\us{j\in \Gd_i}{\cap} H_j)=2,1\leq i\leq n-1$ and $\dim_{\mbb{R}}(\us{j\in \Gd_n}{\cap} H_j)\leq 2$.  Now we prove the following claim.
\begin{claim}
Let $V_i=H_i^{\perp}\subs \mbb{R}^l,1\leq i\leq m$. Then 
\begin{enumerate}
	\item $\dim_{\mbb{R}}(V_1+V_{b_1}+V_{b_2})=l-1$.
	\item $V_i\subsetneq V_1+V_{b_1}+V_{b_2}$ for $1\leq i\leq m$.
\end{enumerate}
\end{claim}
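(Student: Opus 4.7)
The plan is to extend to higher dimensions the ``linear independence + linear dependence'' argument from the $l=4$ case (Theorem \ref{theorem:CoplanarLine}), replacing each normal vector $v_i\in\mbb{R}^4$ with its perpendicular subspace $V_i=H_i^{\perp}\subs\mbb{R}^l$ of dimension $l-3$. Using the identity $\dim_{\mbb{R}}(V_1+V_{b_1}+V_{b_2})=l-\dim_{\mbb{R}}(H_1\cap H_{b_1}\cap H_{b_2})$, part (1) is equivalent to $\dim_{\mbb{R}}(H_1\cap H_{b_1}\cap H_{b_2})=1$, while part (2) is equivalent to $H_1\cap H_{b_1}\cap H_{b_2}\subs H_i$ for every $1\leq i\leq m$. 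Combined, the two parts identify $\us{i=1}{\os{m}{\cap}}H_i$ with a one-dimensional line through the origin, which will serve as the line $L$ required by the theorem.

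I first establish the lower bound $\dim_{\mbb{R}}(V_1+V_{b_1}+V_{b_2})\geq l-1$ by showing $\dim_{\mbb{R}}(H_1\cap H_{b_1}\cap H_{b_2})\leq 1$. Since $H_1$ and $H_{b_1}$ share the skew line $L_1$ (which does not pass through origin), one has $\dim_{\mbb{R}}(H_1\cap H_{b_1})=2$, and likewise $\dim_{\mbb{R}}(H_1\cap H_{b_2})=2$. If the triple intersection had dimension at least $2$, both 2-planes $H_1\cap H_{b_1}$ and $H_1\cap H_{b_2}$ would coincide with it, forcing $L_1\subs H_{b_2}$; but then $H_1$ and $H_{b_2}$ would both be 3-dimensional affine hulls of the skew pair $\{L_1,L_2\}$, contradicting the distinctness of $H_1$ and $H_{b_2}$ by uniqueness of such a hull.

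For the containment in part (2), I replicate the ``linear dependence'' step of the $l=4$ proof in phases. For $V_{b_j}$ with $3\leq j\leq a_1$: if $V_{b_j}\nsbq V_1+V_{b_1}+V_{b_2}$, then consider the triple $H_2,H_{b_1+1},H_{b_{j-1}+1}$, each containing the skew line $L_{a_1+1}$. A rank/matrix argument mirroring the $l=4$ computation then forces $L_{a_1+1}$ to pass through origin, contradicting the hypothesis that $L_i$ does not pass through origin for $i\leq n-1$. For intermediate $H_i$ with $b_{j-1}+1\leq i\leq b_j-1$, the containment $V_i\subs V_1+V_{b_j}$ follows from the analogous argument applied to $H_1,H_i,H_{b_j}$ (all containing $L_1$), and hence $V_i\subs V_1+V_{b_1}+V_{b_2}$. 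For hulls $H_i$ with $i>b_{a_1}$, I use the dual argument: every such $H_i$ lies in some $\Gd_r$ with $a_1+1\leq r\leq n-1$, and $\Gd_r$ necessarily contains one hull $H_s\in\Gd_1$ and one $H_t\in\Gd_2$ (each distinct from $H_1$), whose $V$'s already lie in $V_1+V_{b_1}+V_{b_2}$ by the earlier phases; since $V_s+V_t=(H_s\cap H_t)^{\perp}$ equals the $(l-2)$-dimensional subspace containing every $V_j$ for $H_j\in\Gd_r$, we get $V_i\subs V_s+V_t\subs V_1+V_{b_1}+V_{b_2}$.

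What remains is the upper bound $\dim_{\mbb{R}}(V_1+V_{b_1}+V_{b_2})\leq l-1$, equivalently $\dim_{\mbb{R}}(H_1\cap H_{b_1}\cap H_{b_2})\geq 1$. I expect this to be the main obstacle, because in dimensions $l\geq 5$ three generic 3-planes need not share a common line (in contrast to the $l=4$ case, where the bound is automatic from dimension counting). My plan is to trace the rank argument of Step 2 carefully in the hypothetical situation $H_1\cap H_{b_1}\cap H_{b_2}=\{O\}$: in that event, the higher-dimensional analogue of the matrix used in the $l=4$ proof would acquire too large a rank, forcing some $L_{a_1+1}$-type skew line (with index $\leq n-1$) to pass through the origin, contradicting the standing hypothesis. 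Thus the richer combinatorial structure of $\Gs$ coming from the Sylvester--Gallai condition compensates for the loss of the automatic $l=4$ dimension count, yielding the desired upper bound.
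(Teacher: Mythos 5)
Your reduction of both parts to statements about $H_1\cap H_{b_1}\cap H_{b_2}$ is correct, and your argument for $\dim_{\mbb{R}}(H_1\cap H_{b_1}\cap H_{b_2})\leq 1$ is sound. But the proposal has a genuine gap exactly where you flag it: the bound $\dim_{\mbb{R}}(H_1\cap H_{b_1}\cap H_{b_2})\geq 1$ is never proved --- you only announce a ``plan'' to trace a rank argument and expect a contradiction. The paper needs no such argument, and the idea you are missing is that the exact value of the intersection dimension falls out of a single modular-law count once the pairwise intersections are identified geometrically: since the arrangement is central and $L_1,L_2$ do not pass through $O$, the $2$-plane $H_1\cap H_{b_1}$ is precisely the plane spanned by $L_1$ and $O$, and $H_1\cap H_{b_2}$ is the plane spanned by $L_2$ and $O$; their sum contains $L_1\cup L_2\cup\{O\}$ and sits inside $H_1$, so it equals $H_1$ and is $3$-dimensional. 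Hence
\[
\dim_{\mbb{R}}(H_1\cap H_{b_1}\cap H_{b_2})=2+2-3=1,
\]
giving $\dim_{\mbb{R}}(V_1+V_{b_1}+V_{b_2})=l-1$ in one stroke for every $l$: no coordinates, no matrices, and the two inequalities are not separate issues.

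The same remark applies to the first phase of your part (2). You defer $V_{b_j}\subs V_1+V_{b_1}+V_{b_2}$ to an unspecified ``rank/matrix argument mirroring the $l=4$ computation'', but the normal data of a hull in $\mbb{R}^l$ is an $(l-3)$-dimensional subspace rather than a single vector, so the $l=4$ coordinate computation does not transfer literally and you do not say what replaces it. The identity you already invoke in your last phase --- that for $H_r,H_s\in\Gd_p$ with $p\leq n-1$ the sum $V_r+V_s$ equals the fixed $(l-2)$-dimensional space $(\us{t\in\Gd_p}{\cap}H_t)^{\perp}$ --- is all that is needed: it yields the direct chain $V_{b_3}\subs V_1+V_{b_2+1}\subs V_1+V_2+V_{b_1+1}=V_1+V_2+V_{b_2}=V_1+V_{b_1}+V_{b_2}$, using $\Gd_3$, then $\Gd_{a_1+1}$, then $\Gd_2$, then $\Gd_1$, with no contradiction argument at all; the remaining cases then follow as in your later phases. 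Until the missing lower bound in (1) and this first phase of (2) are actually carried out, the proof is incomplete.
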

\begin{proof}[Proof of Claim]
We prove $(1)$ first. The space $(V_1+V_{b_1}+V_{b_2})=(H_1\cap H_{b_1}\cap H_{b_2})^{\perp}$.
We have \equa{\dim_{\mbb{R}}(H_1\cap H_{b_1}\cap H_{b_2})&=\dim_{\mbb{R}}(H_1\cap H_{b_1})+\dim_{\mbb{R}}(H_1\cap H_{b_2})\\&-\dim_{\mbb{R}}((H_1\cap H_{b_1})+(H_1\cap H_{b_2})).}
Now for $i=1,2, H_1\cap H_{b_i}$ is the plane containing $L_i$ and the origin $O$.	Hence the space $(H_1\cap H_{b_1})+(H_1\cap H_{b_2})$ is exactly the affine hull $H_1$ of the lines $L_1$ and $L_2$ containing $O$ which is therefore three dimensional.
So we have $\dim_{\mbb{R}}(H_1\cap H_{b_1}\cap H_{b_2})=1$ and hence $\dim_{\mbb{R}}(V_1+V_{b_1}+V_{b_2})=l-1$. Now we prove $(2)$. First we observe that for $1\leq p \leq n-1, i\neq j,r\neq s,i,j,r,s\in \Gd_p$ the $V_i+V_j=V_r+V_s,\dim_{\mbb{R}}(V_i+V_j)=l-2$. This is because $H_i\cap H_j=H_r\cap H_s=\us{t\in \Gd_p}{\cap} H_t$ and its dimension is $2$. Now we have the following sequence of subspace inclusions. \equa{V_{b_3} &\subsetneq V_1+V_{b_2+1}\\ 
	&\subs V_1+V_2+V_{b_1+1} (\text{because }V_{b_2+1}\subsetneq V_2+V_{b_1+1})\\
 &=V_1+V_2+V_{b_2}(\text{because }V_1+V_{b_1+1}= V_1+V_{b_2})\\
&=V_1+V_{b_1}+V_{b_2}(\text{because }V_1+V_2= V_1+V_{b_1}).}
Similarly we get $V_{b_j}\subsetneq V_1+V_{b_1}+V_{b_2}$ for $3\leq j\leq a_1$. This implies that $V_i\subsetneq V_1+V_{b_1}+V_{b_2}$ for $1\leq i\leq b_{a_1}$. We have $\Gd_n\subs \us{i=1}{\os{n-1}{\cup}}\Gd_i$ and for $a_1+1\leq i\leq n-1$, the set $\Gd_i$ contains two affine hulls one from the set $\Gd_1$ and one from the set $\Gd_2$ different from $H_1$. Hence $V_i\subsetneq V_1+V_{b_1}+V_{b_2},1\leq i\leq m$. This proves the claim.
\end{proof}
Coming to the proof of Theorem~\ref{theorem:CoplanarLineinHigherDimensions}, let $L=(V_1+V_{b_1}+V_{b_2})^{\perp}\subseteq \us{i=1}{\os{m}{\cap}} H_i$. For $1\leq i\leq n$, the line $L$ and $L_i$ are coplanar in the plane $H_r\cap H_s$ for $r\neq s,r,s\in \Gd_i$. Clearly $L\neq L_i,1\leq i\leq n-1$ and by coplanarity of $L$ with say $L_1$, we must have $L\neq L_n$. This proves Theorem~\ref{theorem:CoplanarLineinHigherDimensions}.
\end{proof}
\section{\bf{On (2,5)-Representations}}
We mention a definition first.
\begin{defn}
Let $V$ be a finite dimensional vector space over a field $\mbb{K}$. We say two affine spaces $W_1,W_2\subs V$ are skew if \equ{\dim_{\mbb{K}}W_1+\dim_{\mbb{K}}W_1+1=\dim_{\mbb{K}}(\text{AH}(W_1,W_2))}
where $\text{AH}(W_1,W_2)=\{tw_1+(1-t)w_2\mid w_i\in W_i,i=1,2\}$ is the affine hull of $W_1$ and $W_2$.
\end{defn}
The analogous result of main Theorem~\ref{theorem:Main} for $(2,5)$-representations is not true as we mention in the following remark. Example~\ref{example:25Representation} in the support of this remark is given later in the section.
\begin{remark}
\label{remark:HigherDim}
Let $\Gs$ be a finite family of mutually skew affine spaces in $\mbb{R}^6$ of dimension $2$. Suppose the $5$-dimensional affine span (hull) of every two affine spaces in $\Gs$ is a linear subspace containing origin and contains at least one more affine space of $\Gs$. Then $\Gs$ need not be entirely contained in a single $5$-dimensional linear subspace.
\end{remark}

There have been some results in the direction of this remark. Instead of $\Gs\subs\mbb{R}^6$, we suppose $\Gs\subs \mbb{R}^l$ with $l\geq 11$. Using the bound given in Theorem $1.6$ in~\cite{MR3775994}, it can be shown that the set $\Gs$ is contained in a $10$-dimensional space. This can be obtained as follows. Assume that $\Gs\subs \mbb{R}^l=\{z_{l+1}=1\} \subs \mbb{R}^{l+1}$. Consider the cones of affine spaces in $\Gs$ over the origin in $\mbb{R}^{l+1}$. They are $3$-dimensional linear subspaces of $\mbb{R}^{l+1}$, which intersect pairwise at the origin. Now, we complexify these linear subspaces to get $3$-dimensional linear subspaces in $\mbb{C}^{l+1}$ intersecting pairwise at origin. Here we apply Theorem $1.6$ in~\cite{MR3775994}, to obtain the dimension bound of the span of all $3$-dimensional spaces obtained from $\Gs$ to be $11$. This bound will reduce to $10$ after intersecting with the hyperplane $z_{l+1}=1$. This bound holds even if we assume that the $5$-dimensional affine hulls of any two skew affine spaces in $\Gs$ need not contain origin in $\mbb{R}^l$.

Now we mention another related remark. 
\begin{remark}
	\label{remark:HigherDimTwo}
Suppose we have a finite family $\Gs$ linear $2$-dimensional subspaces of $\mbb{R}^5$ such that any two linear subspaces in $\Gs$ intersect only at origin. Also suppose that in the $4$-dimensional subspaces spanned by two linear subspaces of $\Gs$ there exists at least one more linear subspace from $\Gs$. Then $\Gs$ need not be contained in a single $4$-dimensional subspace. The example just below supports this remark.
\end{remark}
\begin{example}
Consider the more generic enough hyperplane arrangement of Example~\ref{example:FanoPlane} in $\mbb{R}^4$ where the seven mutually skew lines corresponding to the fano plane do not lie in a single three dimensional subspace. Place this arrangement in the subset $\mbb{R}^4=\{z_5=1\}\subs \mbb{R}^5$. Now consider the cones of these seven lines in $\{z_5=1\}$ over origin in $\mbb{R}^5$. Then we get a family $\widetilde{\Gs}=\{P_1,P_2,\cdots,P_7\}$ of seven planes in $\mbb{R}^5$ which pairwise intersect at the origin. So we obtain that  all the seven planes cannot lie in a single four dimensional spaces. Otherwise upon intersecting with $\{z_5=1\}$, this implies that all the seven lines in a single three dimensional space which is a contradiction.
\end{example}

Now we mention a possibility about what could happen to Remark~\ref{remark:HigherDim}.

\begin{example}
\label{example:25Representation}
Let $\widetilde{\Gs}=\{P_1,P_2,\cdots,P_7\}$ be the example obtained in the support of Remark~\ref{remark:HigherDimTwo}. Here $n=7$. Now let $L=\{t(0,0,0,0,0,1)\mid t\in \mbb{R}\}$. Lift the arrangement under the inverse image of the projection $\gp:\mbb{R}^6\lra \mbb{R}^5,\gp(x,y,z,t,u,v)=(x,y,z,t,u)$. So we have $C_i=\gp^{-1}(P_i),1\leq i\leq 7$ the three dimensional spaces such that $C_i\cap C_j=L,1\leq i\neq j\leq 7$ which are obtained from the fano plane $\mbb{P}^2(\mbb{F}_2)$ whose $(1,3)$-representation comes from a more generic enough hyperplane arrangement  in $\mbb{R}^4$. We can find two dimensional affine subspaces $A_i\subs C_i\subs \mbb{R}^6, 1\leq i\leq 7$ such that $A_i\cap L=\{(0,0,0,0,0,t_i)\},t_i\neq 0,t_i\neq t_j,1\leq i\neq j\leq 7$ which are mutually skew and $C_i$ is the cone of $A_i$ in $\mbb{R}^6$. Then the affine hulls of $A_i$ and $A_j$ for $1\leq i\neq j\leq 7$ is exactly $C_i+C_j$ which passes through origin and is $5$-dimensional. The affine hulls $C_i+C_j,1\leq i\neq j\leq 7$ do not lie in a single five dimensional space in $\mbb{R}^6$. 
\end{example}

\end{document}